\crefname{equation}{}{} 
\crefname{enumi}{}{} 
\crefname{figure}{Figure}{Figures}
\theoremstyle{plain}
\newtheorem{lemma}{Lemma}[section]
\newtheorem{proposition}[lemma]{Proposition}
\newtheorem{theorem}[lemma]{Theorem}
\newtheorem{corollary}[lemma]{Corollary}
\newtheorem{remark}[lemma]{Remark}
\theoremstyle{definition}
\newtheorem{definition}[lemma]{Definition}
\theoremstyle{remark}
\numberwithin{equation}{section}
\newcommand{\pt}{\partial_t}
\newcommand{\R}{\mathbb{R}}
\begin{document}

\title[Controllability debonding]{Controllability of a one-dimensional dynamic debonding model} 

\subjclass[2020]{%
35L05, 
93B05, 
93C20, 
35Q74, 
35R35, 
74H20.
}

\keywords{Boundary controllability; dynamic debonding; wave equation; wave equation in time-dependent domains; dynamic fracture.}

\author[N.~De Nitti]{Nicola De Nitti}
\address[N.~De Nitti]{Università di Pisa, Dipartimento di Matematica, Largo Bruno Pontecorvo 5, 56127 Pisa, Italy.}
\email[]{nicola.denitti@unipi.it}

\author[A.~Shao]{Arick Shao}
\address[A.~Shao]{Queen Mary University of London, School of Mathematical Sciences, London E1 4NS, United Kingdom.}
\email[]{a.shao@qmul.ac.uk}

\begin{abstract}
We investigate a one-dimensional dynamic debonding model, introduced by Freund (1990), in which the wave equation is coupled with a Griffith criterion governing the propagation of the fracture.
In particular, we study the boundary controllability of the system to a prescribed target state.
Our main results provide precise characterizations of the reachable target states, in both $C^{ 0, 1 }$ and $C^1$ regularity settings, and construct exact controls toward these target states.
\end{abstract}

\maketitle

\section{Introduction}
\label{sec:intro}

We study an elastodynamic model for a thin film, initially bonded to a rigid substrate and progressively detached under external loading; see \cite[Section 7.4]{MR1054375}.
The detached region deforms according to the equations of elastodynamics, and its oscillations affect the motion of the interface between the detached part and the portion still fixed to the substrate. 

Following the presentation in \cite{MR3542962}, we describe the detached region as a time-dependent, expanding domain, where the transverse displacement satisfies the linear wave equation and the evolution of this domain is itself unknown and is determined by the physical principle that the body's internal energy (kinetic and potential) remains stable. The resulting model is 
\begin{align} \label{eq.debonding_y}
\begin{cases}
\partial_{tt}^2 y (t, x) - \partial_{xx}^2 y (t, x) = 0 \text{,} & t > 0 \text{, } 0 < x < \ell (t) \text{,} \\
y (t, 0) = u(t) \text{,} & t \geq 0 \text{,} \\
y (t, \ell(t)) = 0 \text{,} & t \geq 0 \text{,} \\
y (0, x) = y_0 (x) \text{,} & 0 \leq x \leq \ell_0 \text{,} \\
\pt y (0, x) = y_1 (x) \text{,} & 0 \leq x \leq \ell_0 \text{,}
\end{cases}
\end{align}
where the \emph{displacement} $y$ is governed by the linear wave equation.
Moreover, the evolution of the \emph{debonding front} $\ell$---which acts as a free boundary for the domain of $y$---is determined by a set of conditions involving the internal energy known as \emph{Griffith's criterion}:
\begin{align} \label{eq:griff}
\begin{cases}
\ell' (t) \geq 0 \text{,} & t > 0 \text{,} \\
G_{ \ell' (t) } (t) \leq \kappa ( \ell(t) ) \text{,} & t > 0 \text{,} \\
\ell' (t) \big[ G_{ \ell' (t) } (t) - \kappa ( \ell(t) ) \big] = 0 \text{,} & t > 0 \text{,} \\
\ell(0) = \ell_0 \text{.}
\end{cases}
\end{align}
In the above, $\ell_0 > 0$ is a given initial debonding front,
\begin{equation}
\label{eq:griff_G} G_{ \ell' (t) } (t) = \tfrac{1}{2} \big( 1 - \ell' (t)^2 \big) ( \partial_x y (t, \ell(t)) )^2 \text{,} \qquad t > 0
\end{equation}
represents the \emph{dynamic energy release rate at speed $\ell' (t)$}, while $\kappa: [0, +\infty) \rightarrow [ c_1, c_2 ]$, for some fixed $0 < c_1 < c_2$, is a locally Lipschitz function representing the \emph{local toughness} of the glue between the film and the substrate.
This kind of model has recently attracted much attention; see, for instance, \cite{MR3542962,zbMATH07146759,zbMATH07761251,zbMATH07200119,zbMATH07194619,zbMATH07407723,zbMATH07713410,zbMATH06845747}.

In this article, we are interested in the following \emph{exact controllability problem} for \crefrange{eq.debonding_y}{eq:griff_G}: For $T>0$ (sufficiently large) and a target state $( \bar{\ell}_0, \bar{y}_0, \bar{y}_1 )$, find a boundary control $u$ such that 
\[
\ell (T) = \bar{\ell}_0 \text{,} \qquad y( T, \cdot ) = \bar{y}_0 \text{,} \qquad \partial_t y (T, \cdot ) = \bar{y}_1 \text{.}
\]
Implicit in this question is the issue of determining the \textit{minimal controllability time}, as well as determining which target states are reachable.

\subsection{Control of Wave Equations}

For wave equations, a fundamental obstruction to controllability is the finite speed of propagation.
Indeed, some minimum amount of time is required for information from the boundary controls to reach the entire domain.
In particular, this implies a lower bound on the controllability time (depending on the region where the control is applied) that is necessary for any boundary controllability result to hold.  

The controllability of linear wave equations on a fixed domain has been extensively studied for several decades (see, e.\,g., \cite{MR2302744,MR2549374,MR3220862,MR4823719} and references therein).
Pioneering contributions are due to Russell \cite{MR274917,MR294896}.
A systematic approach is provided by the \textit{Hilbert Uniqueness Method} (HUM) of Lions \cite{MR838402,MR931277}, which is closely connected to the abstract functional analytic framework developed in \cite{MR451141}. 

In one spatial dimension, exact controllability results can be obtained directly from d'Alembert's formula \cite[Chapter~0, pp.~1--6]{MR1359765}.
Another approach is based on Fourier series, making use of Ingham's inequality \cite{MR1545625} and its generalizations applied to the Fourier expansions of the adjoint solution.
These techniques also recover the optimal controllability time.

For general dimensions, multiplier methods provide another class of proofs (see \cite{MR838598,MR1359765,MR1388836,MR1871454}); however, such approaches usually fail to yield sharp lower bounds for the controllability time.
Optimal results were obtained via microlocal analysis, most notably by Bardos, Lebeau, and Rauch \cite{MR1178650}, who showed that an controllability inequality holds if and only if the \emph{geometric control condition} (GCC) is satisfied (see also \cite{MR1451210,MR1483711,MR3570496,MR3649373}).
Carleman estimates form yet another family of techniques that have been successfully applied to obtain controllability results, most importantly for wave equations with general lower-order coefficients (see, e.\,g., \cite{zbMATH06190724,zbMATH01588532,zbMATH01595987,zbMATH01519378}).

Control problems for the wave equation in moving domains have been studied since \cite{MR603085}.
There is a substantial amount of literature in one spatial dimension.
In \cite{MR3424115,MR3358470}, multiplier methods were applied to the case of boundaries consisting of two timelike lines; \cite{MR3335761} treated more general domains but without achieving the optimal time; \cite{MR3850022,MR4025991} employed Fourier methods; \cite{MR4376328} used the method of characteristics; and, more recently, \cite{MR3964826} obtained the optimal result using Carleman estimates.
Among other works on this subject, we also mention \cite{MR3348033,MR3657928,MR3747447,MR4186960,MR3029175,MR4303792}, as well as \cite{zbMATH05770530}, which considers a different type of free-boundary problem for the wave equation.
While far less literature exists in higher dimensions, there are some results using multiplier methods (see, e.\,g., \cite{MR1388836}), and more recently using Carleman inequalities (see \cite{MR4450884,MR3964826}).

\subsection{Novel Contributions and Outline} 

In this paper, we study the system \crefrange{eq.debonding_y}{eq:griff_G}, where the boundary evolution depends on the solution itself through a subtle energy criterion.
This feature places the problem outside the scope of previously established results, where the evolution of the domain is prescribed independently of the solution.
There are some recent results on controllability of free boundary problems (Stefan problem, porous medium equation, viscous Burgers with moving interface, etc.), which are closer in spirit but still quite different from our setting (see \cite{MR4253800,MR4173854,MR3048665,MR3204334,2203.03012} and references therein).

Our analysis shows that \emph{exact boundary controllability holds for solutions of \crefrange{eq.debonding_y}{eq:griff_G}}, provided the target state satisfies natural (and necessary) admissibility conditions.
In particular, we obtain a \emph{comprehensive characterization of all the reachable states}, and we achieve a \emph{minimal control time that is optimal in general}.
We also give an \emph{explicit construction of the desired boundary control $u$}.

We obtain controllability results for two classes of solutions to \crefrange{eq.debonding_y}{eq:griff_G}:
\begin{itemize}
\item \emph{$C^{ 0, 1 }$-solutions} ($\ell, y \in C^{ 0, 1 }$): These solutions represent the minimal regularity for which the well-posedness theory of \cite{MR3542962} holds for the system \crefrange{eq.debonding_y}{eq:griff_G}.

\item \emph{$C^1$-solutions} ($\ell, y \in C^1$): These solutions, while only slightly more regular, most crucially no longer allow for shocks arising from derivative discontinuities.
\end{itemize}
In particular, the additional challenge for our $C^1$-controllability result is that we must construct controls that do not contain derivative discontinuities that lead to shocks.


In \cref{sec:prelim}, we collect some preliminary results for solutions of \crefrange{eq.debonding_y}{eq:griff_G}.
In \cref{ssec:representation}, we recall the d'Alembert-type representation formula for $y$ and the evolution equation for $\ell$ derived in \cite{MR3542962}.
In \cref{ssec:wp}, we obtain well-posedness theorems for both $C^{ 0, 1 }$ and $C^1$ solutions.

In \cref{sec:admissible}, we turn to the question of which target states are achievable via boundary controls. In particular, we establish necessary conditions that any attainable target must satisfy: 
\begin{itemize}
\item \textit{Size constraints:} We show the expansion of the domain has a damping effect on the system, which leads to bounds in \cref{thm.obs_damp} that the final data of the system must satisfy.

\item \textit{Compatibility at the boundary:} For $C^1$-solutions, further compatibility conditions arise (see \cref{thm.obs_compat}), which lead to the distinction between admissible \emph{passive final state} and \emph{active final state}. On the other hand, for $C^{ 0, 1 }$-solutions, no such restriction arise. 
\end{itemize}
We also identify portions of $\ell$ that are determined by either the initial or final state:
\begin{itemize}
\item In \cref{sec:admissible_initial}, we identify the \emph{initial branch} of $\ell$ (from $t = 0$ to some $t = t_\ast > 0$) that is determined solely by the initial configuration $( y_0, y_1, \ell_0)$ and the Griffith law.

\item In \cref{sec:admissible_final}, we tackle the much trickier problem of identifying the \emph{final branch} of $\ell$ (from some time $t = \bar{t}_\ast > 0$ to $t = T$) that is determined by the target state.
\end{itemize}
The key ingredient in the latter is the notion of an \emph{admissible final branch} (see \cref{def.final_set} and \cref{def.final_branch})---a constrained differential inclusion representing backward-in-time solutions of the Griffith front that also satisfy the above constraints and compatibility.
This branch encodes the portion of $\ell$ that is forced by the target state and must be matched by any feasible control.

In \cref{sec.control}, we state and prove our main controllability results.
\begin{itemize}
\item \textit{$C^{0,1}$ case (\cref{thm.control_pre}):} \emph{If the initial and target states are $C^{ 0, 1 }$, and if the latter possesses an associated admissible final branch, then, given a large enough timespan $T$, there exists a $C^{0, 1}$ boundary control $u$ that exactly steers the system to the target state}.

\item \textit{$C^1$ case (\cref{thm.control}):} \emph{If initial and target states are in addition $C^1$, then under additional compatibility conditions at the boundary and a stronger admissible final branch condition, we can find a $C^1$ boundary control $u$ that exactly steers the system to the target state}.
\end{itemize}
Of particular interest is the \emph{special case in which the final branch of the debonding front is static} ($\ell' \equiv 0$ near $t = T$).
In this case, we can dispense entirely with the (rather technical) admissible final branch condition, as \emph{the reachability of a target state is determined solely by the aforementioned size constraints (\cref{thm.obs_damp}), compatibility conditions (\cref{thm.obs_compat}), and the timespan $T$}.
This leads to results that are far more natural to state, with easily checked assumptions; these are given in \cref{thm.control_static_pre} ($C^{0,1}$) and \cref{thm.control_static} ($C^1$).

The minimum control time achieved by our main results is essentially given by the sum of the timespan of the initial branch, the timespan of the final branch, and the minimal time required to expand $\ell$ from the end of the initial to the start of the final branch.
See \cref{rmk.control_time} for an argument that this minimal time is in fact optimal in general.

\subsection{Comments on the Proof}

Our proof bypasses classical HUM arguments and relies instead on d'Alembert's-type formulas---we construct the control explicitly via the d'Alembert-type representation formulas in \cref{ssec:representation}.
While this precludes our methods extending to models in higher dimensions, it allows us to take full advantage of the deep interplay among between propagation, fracture mechanics, and control design specified by the Griffith criterion.

The proof is largely based on the following key observations:
\begin{itemize}
\item An initial branch $\ell_i$ of $\ell$ is determined entirely by the initial state $( \ell_0, y_0, y_1 )$.

\item A portion of the target state (namely, $\bar{y}_1 - \bar{y}_0'$) is determined entirely by the control $u$ on the final part $B_{ f, 2 }$ of the (fixed) boundary.

\item Both the remaining portion of the target state ($\bar{y}_1 + \bar{y}_0'$) and a final branch $\ell_f$ of $\ell$ are determined by the control $u$ on the penultimate segment $B_{ f, 1 }$ of the boundary.
\end{itemize}
The above observations lead to natural obstructions for controllability.
Since the debonding front can only move rightward, the start of the final branch $\ell_f$ must lie to the right of the end of the initial branch $\ell_i$.
Furthermore, since there is a fundamental limit to how quickly the debonding front can move (namely, $\ell' < 1$), then we must allow for enough time to move the debonding front from $\ell_i$ to $\ell_f$.
These restrictions lead to the lower bounds on control time in our main results.

The construction of the control $u$ itself proceeds in three stages:
\begin{enumerate}[label=(\arabic*)]
\item \label{it:step1} \textit{Inflation to the final branch:} We construct the first part of $u$ (prior to $B_{ f, 1 }$ above) to guide $\ell$ from the end of the initial branch $\ell_i$ to the entry point of the final branch $\ell_f$. 

\item \label{it:step2} \textit{Following the admissible final branch:} We then construct the next part of $u$ (namely, on $B_{ f, 1 }$) to both follow the final branch $\ell_f$ of $\ell$ and to set a portion (namely, $\bar{y}_1 + \bar{y}_0'$) of the desired target state.
That both parts can be achieved simultaneously is a consequence of the admissible final branch criterion mentioned above.

\item \label{it:step3} \textit{Reaching the final state:} Finally, we construct the last bit of $u$ (on $B_{ f, 2 }$) to achieve the remaining part ($\bar{y}_1 - \bar{y}_0'$) of the boundary data.
(By finite speed of propagation, this portion of $u$ does not affect the debonding front $\ell$.)
\end{enumerate}
In the $C^1$ case, additional care must be taken so both $u'$ and $\ell'$ remain continuous, leading to additional matching and interpolation in the construction of $u$---particularly in Step \cref{it:step1} above.




\section{Preliminaries}
\label{sec:prelim}
In this section, we discuss various properties of the system \crefrange{eq.debonding_y}{eq:griff_G} and their solutions $( \ell, y )$.

\subsection{Representations of the Solution}
\label{ssec:representation}

Throughout this subsection, we assume $( \ell, y )$ is a $C^{ 0, 1 }$- or a $C^1$-solution of \crefrange{eq.debonding_y}{eq:griff_G}.
The boundary conditions in \cref{eq.debonding_y} imply
\begin{equation}
\label{eq.compat_0} y ( t, 0 ) = u (t) \text{,} \qquad y ( t, \ell (t) ) = 0 \text{,} \qquad t \geq 0 \text{,}
\end{equation}
while for $C^1$-solutions, we also have
\begin{equation}
\label{eq.compat_1} \partial_t y ( t, 0 ) = u' (t) \text{,} \qquad \partial_t y ( t, \ell (t) ) + \ell' (t) \, \partial_x y ( t, \ell (t) ) = 0 \text{,} \qquad t \geq 0 \text{.}
\end{equation}
In particular, \crefrange{eq.compat_0}{eq.compat_1} motivate the compatibility conditions in our upcoming well-posedness results (see \cref{thm.wp_pre} and \cref{thm.wp} below).

The analysis in \cite[Section 1]{MR3542962} shows that $y$ can be expressed in terms of a representative function $f: [ -\ell_0, \infty ) \rightarrow \R$, which is fully determined by the conditions
\begin{align} \label{eq:f}
\begin{cases}
f (s) = \tfrac{1}{2} \int_0^{-s} ( y_0' - y_1 ) ( \sigma ) \, \mathrm d \sigma \text{,} & s \in [ -\ell_0, 0 ] \text{,} \\
f (s) = u (s) - u (0) - \tfrac{1}{2} \int_0^s ( y_0' + y_1 ) ( \sigma ) \, \mathrm d \sigma \text{,} & s \in ( 0, \ell_0 ] \text{,} \\
f ( t + \ell (t) ) = u ( t + \ell (t) ) + f ( t - \ell (t) ) \text{,} & t \in ( 0, \infty ) \text{.}
\end{cases}
\end{align}
In particular, $y$ is expressed in terms of $u$ and $f$ via the formulas
\begin{equation}
\label{eq:y} y ( t, x ) = u ( t + x ) - f ( t + x ) + f ( t - x ) \text{,} \qquad t \geq 0 \text{,} \quad 0 \leq x \leq \ell (t). \\
\end{equation}

To convert \crefrange{eq:f}{eq:y} into more convenient forms, we define the maps
\begin{equation}
\label{eq.tau} \tau_\pm: [ 0, \infty ) \rightarrow [ \pm \ell_0, \infty ) \text{,} \qquad \tau_\pm (t) \coloneqq t \pm \ell (t) \text{,}
\end{equation}
which are strictly increasing and hence invertible, and satisfy (as a direct computation shows)
\begin{equation}
\label{eq.tau_deriv} \tau_\pm' (t) = 1 \pm \ell' (t) \text{,} \qquad ( \tau_\pm^{-1} )' (s) = \tfrac{1}{ 1 \pm \ell' ( \tau_\pm^{-1} (s) ) } \text{.}
\end{equation}
Using $\tau_\pm$, we can more conveniently rewrite \cref{eq:f} as
\begin{align}
\label{eq.f} f (s) = \begin{cases}
\tfrac{1}{2} \int_0^{-s} ( y_0' - y_1 ) ( \sigma ) \, \mathrm d \sigma \text{,} & s \in [ -\ell_0, 0 ] \text{,} \\
u (s) - u (0) - \tfrac{1}{2} \int_0^s ( y_0' + y_1 ) ( \sigma ) \, \mathrm d \sigma \text{,} & s \in ( 0, \ell_0 ] \text{,} \\
u (s) + f ( ( \tau_- \circ \tau_+^{-1} ) (s) ) \text{,} & s \in ( \ell_0, \infty ) \text{.}
\end{cases}
\end{align}
Formula \cref{eq.f} defines $f$ iteratively:
\begin{itemize}
\item the first two parts of \cref{eq.f} give $f (s)$ for the smallest values of $s$;
\item the last part of \cref{eq.f} expresses $f (s)$ for larger $s$ in terms of $u (s)$ and $f (s')$, for an input $s' < s$ at which $f$ was already defined.
\end{itemize}

One undesirable feature of \cref{eq:y} is that $y ( t, x )$ depends on $u$ at a later time.
However, we can eliminate this by rewriting \cref{eq:y}, for all $t \geq 0$ and $0 \leq x \leq \ell (t)$, in the form
\begin{equation}
\label{eq.y} y ( t, x ) = \begin{cases}
\tfrac{1}{2} \int_{ x - t }^{ x + t } ( y_0' + y_1 ) ( \sigma ) \, \mathrm d \sigma \text{,} & t + x \leq \ell_0 \text{, } t \leq x \text{,} \\
u ( t - x ) + \tfrac{1}{2} \int_{ t - x }^{ t + x } ( y_0' + y_1 ) ( \sigma ) \, \mathrm d \sigma \text{,} & t + x \leq \ell_0 \text{, } t > x \text{,} \\
f ( t - x ) - f ( ( \tau_- \circ \tau_+^{-1} ) ( t + x ) ) \text{,} & t + x > \ell_0 \text{.}
\end{cases}
\end{equation}
The first part of \cref{eq.y} follows from d'Alembert's formula; the second is similarly standard but also incorporates the boundary contribution $u$.
The last part of \cref{eq.y} is an immediate consequence of the identity \cref{eq:y} and the last part of \cref{eq.f}.

Owing to \cite[Section 2.2]{MR3542962}, we know the evolution \crefrange{eq:griff}{eq:griff_G} of the debonding front (for $C^1$-solutions) can be rewritten as the following initial value problem:
\begin{align} \label{eq.griffith}
\begin{cases}
\ell' (t) = \max \Big[ \tfrac{ 2 f' ( t - \ell (t) )^2 - \kappa ( \ell (t) ) }{ 2 f' ( t - \ell (t) )^2 + \kappa ( \ell (t) ) }, 0 \Big] \text{,} & t > 0 \text{,} \\
\ell (0) = \ell_0 \text{.}
\end{cases}
\end{align}
In particular, \cref{eq.griffith} implies $\ell$ satisfies
\begin{equation}
\label{eq.timelike} 0 \leq \ell' (t) < 1 \text{,} \qquad t > 0 \text{.}
\end{equation}
For $C^{ 0, 1 }$-solutions, the properties for $\ell' (t)$ in \crefrange{eq.griffith}{eq.timelike} hold as well, but only almost everywhere. 

\begin{remark}
From \cite{MR3542962}, we recall that \emph{in order to solve our original system \crefrange{eq.debonding_y}{eq:griff_G}, it suffices to solve instead the system \cref{eq.f}, \cref{eq.griffith} for $( \ell, f )$}.
From $( \ell, f )$, one then obtains the desired solution $( \ell, y )$ to \crefrange{eq.debonding_y}{eq:griff_G} by setting $y$ as in \cref{eq.y}.
\end{remark}

It will also be useful to obtain some derivative formulas.
Differentiating \cref{eq.f} yields
\begin{align}
\label{eq.ff} f' (s) \coloneqq \begin{cases}
\tfrac{1}{2} [ - y_0' (s) + y_1 (s) ] \text{,} & s \in [ -\ell_0, 0 ] \text{,} \\
u' (s) - \tfrac{1}{2} [ y_0' ( s ) + y_1 ( s ) ] \text{,} & s \in ( 0, \ell_0 ] \text{,} \\
u' (s) + f' ( ( \tau_- \circ \tau_+^{-1} ) (s) ) \, \tfrac{ 1 - \ell' ( \tau_+^{-1} ( s ) ) }{ 1 + \ell' ( \tau_+^{-1} ( s ) ) } \text{,} & s \in ( \ell_0, \infty ) \text{.}
\end{cases}
\end{align}
Combining \cref{eq.tau_deriv}, \cref{eq.f}, and \cref{eq.y}, we then see, for all $t + x > \ell_0$ and $0 \leq x \leq \ell (t)$, that
\begin{align}
\label{eq.yy} \partial_t y ( t, x ) &= f' ( t - x ) - f' ( ( \tau_- \circ \tau_+^{-1} ) ( t + x ) ) \, \tfrac{ 1 - \ell' ( \tau_+^{-1} ( t + x ) ) }{ 1 + \ell' ( \tau_+^{-1} ( t + x ) ) } \text{,} \\
\notag \partial_x y ( t, x ) &= - f' ( t - x ) - f' ( ( \tau_- \circ \tau_+^{-1} ) ( t + x ) ) \, \tfrac{ 1 - \ell' ( \tau_+^{-1} ( t + x ) ) }{ 1 + \ell' ( \tau_+^{-1} ( t + x ) ) } \text{.}
\end{align}
The above can be more conveniently stated in terms of characteristic derivatives:
\begin{align}
\label{eq.yyy} \tfrac{1}{2} ( \partial_t - \partial_x ) y ( t, x ) &= f' ( t - x ) \text{,} \\
\notag \tfrac{1}{2} ( \partial_t + \partial_x ) y ( t, x ) &= - f' ( ( \tau_- \circ \tau_+^{-1} ) ( t + x ) ) \, \tfrac{ 1 - \ell' ( \tau_+^{-1} ( t + x ) ) }{ 1 + \ell' ( \tau_+^{-1} ( t + x ) ) } \text{.}
\end{align}
Again, for $C^{ 0, 1 }$-solutions, the equalities in \crefrange{eq.ff}{eq.yyy} only hold a.\,e.

\begin{remark}
For any $t \geq 0$ and $0 \leq x \leq \ell (t)$, we have
\begin{equation}
\label{eq.tau_mono_1} \tau_+^{-1} ( t + x ) \leq \tau_+^{-1} ( t + \ell (t) ) \leq t \text{,}
\end{equation}
where we recalled the monotonicity of $\tau_+$.
Similarly, if $t + x > \ell_0$, then we also have
\begin{equation}
\label{eq.tau_mono_2} ( \tau_- \circ \tau_+^{-1} ) ( t + x ) \leq ( \tau_- \circ \tau_+^{-1} ) ( t + \ell (t) ) = t - \ell (t) \leq t - x \text{.}
\end{equation}
Thus, in the representation formulas for $y$ in \cref{eq.y}, \cref{eq.yy}, and \cref{eq.yyy}, we have that $y ( t, x )$ and its derivatives depend only on $u$ and $\ell$ at earlier times.
\end{remark}

\subsection{Well-Posedness}
\label{ssec:wp}

We now state the relevant existence and uniqueness results for \crefrange{eq.debonding_y}{eq:griff_G}.
First, well-posedness for $C^{ 0, 1 }$-solutions was proved in \cite[Theorem 3.5]{MR3542962}, which we restate here in the setting of a finite time interval.

\begin{theorem}[Existence and uniqueness] \label{thm.wp_pre}
Let us fix $T > 0$, assume the given data satisfies
\begin{equation}
\label{eq.wp_data_pre} ( y_0, y_1 ) \in ( C^{ 0, 1 } \times L^\infty ) [ 0, \ell_0 ] \text{,} \qquad u \in C^{ 0, 1 } [ 0, T ] \text{,}
\end{equation}
and suppose the following compatibility conditions hold:
\begin{equation}
\label{eq.wp_compat_pre} y_0 (0) = u (0) \text{,} \qquad y_0 ( \ell_0 ) = 0 \text{.}
\end{equation}
Then, there exists a unique solution $( \ell, y )$ to \crefrange{eq.debonding_y}{eq:griff_G} such that
\begin{equation}
\label{eq.wp_reg_pre} \ell \in C^{ 0, 1 } [ 0, T ] \text{,} \qquad y \in C^{ 0, 1 } ( \{ 0 \leq t \leq T \text{, } 0 \leq x \leq \ell (t) \} ) \text{,} \qquad f \in C^{ 0, 1 } [ -\ell_0, T ] \text{.}
\end{equation}
\end{theorem}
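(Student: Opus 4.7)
The plan is to follow the approach of \cite{MR3542962} and, as indicated in the preceding remark, reduce the problem to constructing the pair $(\ell, f)$ satisfying the coupled system \cref{eq.f}--\cref{eq.griffith}; the solution $y$ is then recovered via \cref{eq.y}, and its $C^{0,1}$ regularity on the spacetime region $\{0 \leq t \leq T, 0 \leq x \leq \ell(t)\}$ is inherited from the regularity of $f$, $u$, and $\ell$. I will build $(\ell, f)$ iteratively on characteristic strips. As a first step, using the assumptions \cref{eq.wp_data_pre}, I define $f$ directly on $[-\ell_0, \ell_0]$ via the first two branches of \cref{eq.f}; the compatibility condition $y_0(0) = u(0)$ ensures continuity across $s = 0$, and $f \in C^{0,1}[-\ell_0, \ell_0]$ with $f'$ given a.e.\ by \cref{eq.ff}.

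Next, set $t_0 \coloneqq 0$ and inductively let $t_{n+1}$ denote the time characterized by $\tau_-(t_{n+1}) = \tau_+(t_n)$; equivalently, $t_{n+1} - \ell(t_{n+1}) = t_n + \ell(t_n)$. On each strip $[t_n, t_{n+1}]$ I perform two tightly coupled extensions. First, assuming $f$ is already known on $[-\ell_0, \tau_+(t_n)]$, the argument $t - \ell(t) = \tau_-(t)$ of \cref{eq.griffith} stays in this set for $t \in [t_n, t_{n+1}]$, so the Griffith relation becomes a scalar initial value problem for $\ell$ with right-hand side taking values in $[0, 1)$; Carathéodory theory yields a unique absolutely continuous extension, and since $\ell' \in [0, 1)$ a.e., in fact $\ell \in C^{0,1}[t_n, t_{n+1}]$. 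Second, once $\ell$ is known on $[t_n, t_{n+1}]$, the map $\tau_-\circ\tau_+^{-1}$ sends $[\tau_+(t_n), \tau_+(t_{n+1})]$ into $[\tau_-(t_n), \tau_+(t_n)]$, where $f$ was already constructed, so the third branch of \cref{eq.f} defines $f$ on $[\tau_+(t_n), \tau_+(t_{n+1})]$. The Lipschitz regularity of $f$ on the new interval follows from the Lipschitz character of $u$, of $\tau_+^{-1}$ (because $\tau_+'\geq 1$), and of $f$ on the previous interval. Uniqueness on each strip is inherited from the uniqueness in the Carathéodory ODE together with the explicit functional formula for $f$.

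A straightforward count shows the iteration terminates: since $t_{n+1} - t_n = \ell(t_{n+1}) + \ell(t_n) \geq 2\ell_0$, only finitely many strips are required to exhaust $[0, T]$. Concatenating them produces $\ell \in C^{0,1}[0, T]$ and $f \in C^{0,1}[-\ell_0, T]$, and defining $y$ via \cref{eq.y} completes the construction.

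The main obstacle is the low regularity of the right-hand side of the Griffith ODE \cref{eq.griffith}: the map $t \mapsto f'(t - \ell(t))$ is only $L^\infty$, so one cannot invoke classical Picard--Lindelöf. The key points that make the Carathéodory framework work are (i) the outer truncation $\max[\,\cdot\,, 0]$ together with the lower bound $\kappa \geq c_1 > 0$ keep the right-hand side uniformly bounded in $[0, 1)$, preventing blow-up and preserving the timelike bound \cref{eq.timelike}; and (ii) the dependence on $\ell$ enters only through the shifted argument $t - \ell(t)$, which remains in a set where $f'$ is already determined. This is precisely the content of the well-posedness argument of \cite[Theorem~3.5]{MR3542962}, which I invoke on each strip; no new difficulty arises from restricting to a finite interval $[0, T]$.
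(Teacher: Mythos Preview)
The paper does not actually prove \cref{thm.wp_pre}; it simply cites \cite[Theorem~3.5]{MR3542962} and restates the result on a finite time interval. Your proposal therefore goes beyond what the paper does by sketching the iterative strip-by-strip construction from that reference, and the sketch is essentially correct and faithful to the original argument.

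One small caveat: your phrase ``Carath\'eodory theory yields a unique absolutely continuous extension'' is a bit glib on the uniqueness side. Carath\'eodory's theorem provides existence; uniqueness for \cref{eq.griffith} genuinely requires the additional structure exploited in \cite{MR3542962} (in particular, a change of variable that tames the composition $f'(t-\ell(t))$, since $f'$ is merely $L^\infty$ and $\ell \mapsto f'(t-\ell)$ is not Lipschitz in any useful sense). Since you ultimately invoke \cite[Theorem~3.5]{MR3542962} on each strip, this is covered, but it would be more accurate to attribute uniqueness to that reference rather than to Carath\'eodory theory alone.
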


\begin{remark}
The representation formula \cref{eq.y} implies slightly more regularity for $y$ than was stated in \cref{eq.wp_reg_pre}---namely, the cross-sections $y ( t, \cdot )$ and $y ( \cdot, x )$ are $C^{ 0, 1 }$ for \emph{every} fixed $t$ and $x$.
\end{remark}

We can then adapt the well-posedness result in \cref{thm.wp_pre} to $C^1$-solutions.

\begin{theorem} \label{thm.wp}
Let us fix $T > 0$, assume the given data satisfies
\begin{equation}
\label{eq.wp_data} ( y_0, y_1 ) \in ( C^1 \times C^0 ) [ 0, \ell_0 ] \text{,} \qquad u \in C^1 [ 0, T ] \text{,}
\end{equation}
and suppose the following compatibility conditions hold:
\begin{align}
\label{eq.wp_compat} y_0 (0) = u (0) \text{,} &\qquad y_0 ( \ell_0 ) = 0 \text{,} \\
\notag y_1 (0) = u' (0) \text{,} &\qquad y_1 ( \ell_0 ) = - \max \left[ \frac{ \frac{1}{2} | y_1 ( \ell_0 ) - y_0' ( \ell_0 ) |^2 - \kappa ( \ell_0 ) }{ \tfrac{1}{2} | y_1 ( \ell_0 ) - y_0' ( \ell_0 ) |^2 + \kappa ( \ell_0 ) }, \, 0 \right] y_0' ( \ell_0 ) \text{.} 
\end{align}
Then, there exists a unique solution $( \ell, y )$ to \crefrange{eq.debonding_y}{eq:griff_G} such that
\begin{equation}
\label{eq.wp_reg} \ell \in C^1 [ 0, T ] \text{,} \qquad y \in C^1 ( \{ 0 \leq t \leq T \text{, } 0 \leq x \leq \ell (t) \} ) \text{,} \qquad f \in C^1 [ -\ell_0, T ] \text{.}
\end{equation}
\end{theorem}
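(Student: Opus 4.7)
The plan is to invoke Theorem~\ref{thm.wp_pre} for the $C^{0,1}$-solution $(\ell, y)$ together with its representative $f \in C^{0,1}[-\ell_0,T]$, and then bootstrap the regularity using the representation formulas \cref{eq.f}--\cref{eq.ff} and the Griffith ODE \cref{eq.griffith}. Uniqueness is immediate from Theorem~\ref{thm.wp_pre}, so the task reduces to upgrading $f$ to $C^1[-\ell_0,T]$: once this is done, \cref{eq.griffith} gives $\ell \in C^1[0,T]$ (its right-hand side depends continuously on $f'(t-\ell(t))$ and $\ell(t)$, since $\kappa \geq c_1 > 0$ keeps the denominator bounded away from zero), and then \cref{eq.y}, \cref{eq.yy}--\cref{eq.yyy} give $y \in C^1$, with $\tau_\pm^{-1}$ of class $C^1$ thanks to $\ell' < 1$.

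I would first establish $f \in C^1[-\ell_0,\ell_0]$ directly: the regularity hypotheses \cref{eq.wp_data} together with \cref{eq.ff} give $f' \in C^0$ on each of $[-\ell_0,0]$ and $[0,\ell_0]$, and the compatibility $y_1(0) = u'(0)$ in \cref{eq.wp_compat} is precisely what matches $f'(0^-) = f'(0^+)$. Next, I would propagate the regularity along the sequence of characteristic reflection points $s_1 := \ell_0 < s_2 < s_3 < \ldots$, defined inductively by $(\tau_-\circ\tau_+^{-1})(s_{k+1}) = s_k$. On each interval $(s_k, s_{k+1}]$, the third branch of \cref{eq.ff} expresses $f'(s)$ in terms of $u'(s)$ and values of $f'$ and $\ell'$ on previously handled intervals, while \cref{eq.griffith} expresses $\ell'$ as a continuous function of the same quantities. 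An inductive step therefore yields $f' \in C^0$ on $(s_k, s_{k+1}]$ and $\ell' \in C^0$ on the corresponding $t$-interval, while the matching at $s = s_{k+1}$ for $k \geq 1$ reduces, via \cref{eq.ff}, to continuity of $f'$ at $s = s_k$, which is the induction hypothesis. Since $s_{k+1} - s_k = 2\ell(\tau_-^{-1}(s_k)) \geq 2\ell_0$, the interval $[-\ell_0, T]$ is exhausted in finitely many steps, yielding $f \in C^1[-\ell_0,T]$.

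The main obstacle is the matching condition at the first reflection, $s_1 = \ell_0$, where the second and third branches of \cref{eq.ff} meet and no previous step is available. A direct calculation using $f'(-\ell_0) = \tfrac{1}{2}(y_1(\ell_0) - y_0'(\ell_0))$ and the Griffith formula \cref{eq.griffith} for $\ell'(0)$ shows that $f'(\ell_0^-) = f'(\ell_0^+)$ holds if and only if $y_1(\ell_0) = -\ell'(0)\,y_0'(\ell_0)$ with $\ell'(0)$ given by the Griffith formula evaluated at the initial data---which is exactly the last condition in \cref{eq.wp_compat}. All subsequent junctions then match automatically by the inductive argument above, which is why the two displayed compatibility conditions in \cref{eq.wp_compat} are the only ones needed.
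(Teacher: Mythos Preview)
Your proposal is correct and follows essentially the same approach as the paper: invoke \cref{thm.wp_pre} for existence and uniqueness at the $C^{0,1}$ level, then bootstrap $f'$ and $\ell'$ to $C^0$ by iterating along the characteristic reflection structure via \cref{eq.ff} and \cref{eq.griffith}, using the compatibility conditions \cref{eq.wp_compat} precisely to match the one-sided limits of $f'$ at $s=0$ and $s=\ell_0$. Your explicit introduction of the reflection points $s_k$ and the observation $s_{k+1}-s_k \geq 2\ell_0$ make the finite-step termination more transparent than the paper's terse ``repeat this indefinitely until we reach time $T$'', but the argument is the same.
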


\begin{proof}[Proof of \cref{thm.wp}]
\cref{thm.wp_pre} provides the solution $( \ell, y )$ to \crefrange{eq.debonding_y}{eq:griff_G}, so we need only obtain the regularity \cref{eq.wp_reg}.
For this, we apply an iteration similar to that found in \cite{MR3542962}.

To start, by the first two parts of \cref{eq.f} and \cref{eq.ff}, along with \eqref{eq.wp_compat}, we have $f \in C^1 [ -\ell_0, \ell_0 )$. Here, we stress that \cref{eq.wp_compat} implies that $f$ is $C^1$ at $0$.
From \cref{eq.griffith}, we obtain $\ell \in C^1 [ 0, \tau_-^{-1} ( \ell_0 ) )$, and it follows that $\tau_\pm \in C^1 [ 0, \tau_-^{-1} ( \ell_0 ) )$.
The last parts of \cref{eq.f} and \cref{eq.ff} then yield $f \in C^1 [ -\ell_0, ( \tau_+ \circ \tau_-^{-1} ) ( \ell_0 ) )$.
Again, we need the compatibility condition \cref{eq.wp_compat} to ensure $f$ is $C^1$ at $\ell_0$.

We now iterate as follows.
Since we extended the $C^1$-regularity of $f$, applying \eqref{eq.griffith} extends the $C^1$-regularity of $\ell$ (and hence $\tau_\pm$).
Feeding this into \eqref{eq.f} and \eqref{eq.ff}, we again further extend the $C^1$-regularity of $f$, returning to the previous step.
We then repeat this indefinitely until we reach time $T$, and we obtain $\ell \in C^1 [ 0, T ]$ and $f \in C^1 [ -\ell_0, T ]$.

The above proves the first and third parts of \cref{eq.wp_reg}.
The second part of \cref{eq.wp_reg} follows from \cref{eq.y} and the above regularity of $\ell$ and $f$.
The compatibility conditions \cref{eq.wp_compat} are essential for showing that the formula for $y$ in \cref{eq.y} is indeed $C^1$.
\end{proof}

\begin{remark}
For any $C^{ 0, 1 }$-solution $( \ell, y )$ of \crefrange{eq.debonding_y}{eq:griff_G}, the compatibility conditions \cref{eq.wp_compat_pre} follow from \cref{eq.compat_0}.
Similarly, for any $C^1$-solution $( \ell, y )$ of \crefrange{eq.debonding_y}{eq:griff_G}, the conditions \cref{eq.wp_compat} follow from \crefrange{eq.compat_0}{eq.compat_1} and \cref{eq.griffith}.
Thus, \cref{eq.wp_compat_pre} and \cref{eq.wp_compat} are necessary for the well-posedness results as stated.
\end{remark}

\section{Admissible solutions}
\label{sec:admissible}

We now derive some additional a priori properties of solutions to \crefrange{eq.debonding_y}{eq:griff_G} that lead to fundamental obstructions to controllability.
For convenience, we define the following quantities in the remainder of this section:
\begin{equation}
\label{eq.final_data} T > \ell_0 \text{,} \qquad \bar{\ell}_0 \coloneqq \ell ( T ) \geq \ell_0 \text{,} \qquad \bar{y}_0 \coloneqq y ( T, \cdot ) \text{,} \qquad \bar{y}_1 \coloneqq \partial_t y ( T, \cdot ) \text{.}
\end{equation}
Observe that \cref{eq.yyy} and \cref{eq.final_data} imply the following identities, for all $0 \leq x \leq \bar{\ell}_0$:
\begin{align}
\label{eq.final_dataa} \tfrac{1}{2} ( \bar{y}_1 - \bar{y}_0' ) (x) &= f' ( T - x ) \text{,} \\
\notag \tfrac{1}{2} ( \bar{y}_1 + \bar{y}_0' ) (x) &= - f' ( ( \tau_- \circ \tau_+^{-1} ) ( T + x ) ) \, \tfrac{ 1 - \ell' ( \tau_+^{-1} ( T + x ) ) }{ 1 + \ell' ( \tau_+^{-1} ( T + x ) ) } \text{.}
\end{align}
For $C^{ 0, 1 }$-solutions, both parts of \cref{eq.final_dataa} only hold a.\,e.

\begin{remark}
The assumption $T > \ell_0$ in \cref{eq.final_data} implies
\[
\tau_+^{-1} ( T + x ) \geq \tau_+^{-1} (T) > 0 \text{,} \qquad T - x \geq ( \tau_- \circ \tau_+^{-1} ) ( T + x ) \geq ( \tau_- \circ \tau_+^{-1} ) (T) > - \ell_0 \text{,}
\]
so the quantities in \cref{eq.final_dataa} are all well-defined.
\end{remark}

\subsection{Constraint and Compatibility Conditions}

In this subsection, we derive restrictions on $( \bar{y}_0, \bar{y}_1 )$ imposed by our system \crefrange{eq.debonding_y}{eq:griff_G}.

First, we note the expanding domain has the effect of damping part of the solution.

\begin{proposition} \label{thm.obs_damp}
The following hold (pointwise for $C^1$-solutions, a.\,e.~for $C^{ 0, 1 }$-solutions):
\begin{align}
\label{eq.obs_damp} | ( \bar{y}_1 + \bar{y}_0' ) (x) |^2 &\leq 2 \, \kappa ( \ell ( \tau_+^{-1} ( x + T ) ) ) \text{,} && x \in [ 0, \bar{\ell}_0 ] \text{,} \\
\notag | ( \bar{y}_1 + \bar{y}_0' ) ( \tau_+ (t) - T ) |^2 &\leq 2 \, \kappa ( \ell (t) ) \text{,} && t \in [ \tau_+^{-1} (T), T ] \text{.} 
\end{align}
\end{proposition}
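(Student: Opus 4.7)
The plan is to directly compute $|(\bar{y}_1 + \bar{y}_0')(x)|^2$ using the representation formula \cref{eq.final_dataa} and then use the Griffith relation \cref{eq.griffith} to bound the resulting expression.

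Concretely, I would first set $t = \tau_+^{-1}(T+x)$, so that $\tau_+(t) = t + \ell(t) = T+x$ and $(\tau_- \circ \tau_+^{-1})(T+x) = t - \ell(t)$. The second equation in \cref{eq.final_dataa} then reads
\begin{equation*}
\tfrac{1}{2}(\bar{y}_1 + \bar{y}_0')(x) = - f'(t-\ell(t))\,\frac{1-\ell'(t)}{1+\ell'(t)},
\end{equation*}
so that, writing $A \coloneqq f'(t-\ell(t))^2$ and $\kappa \coloneqq \kappa(\ell(t))$,
\begin{equation*}
\tfrac{1}{4}|(\bar{y}_1 + \bar{y}_0')(x)|^2 = A\left(\frac{1-\ell'(t)}{1+\ell'(t)}\right)^{\!2}.
\end{equation*}

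Now I split according to whether the Griffith front is stationary or advancing at time $t$. If $\ell'(t) = 0$, then \cref{eq.griffith} forces $2A \leq \kappa$, and the factor $(1-\ell'(t))/(1+\ell'(t)) = 1$, so the right-hand side is $A \leq \kappa/2$. If instead $\ell'(t) > 0$, then \cref{eq.griffith} gives $\ell'(t) = (2A-\kappa)/(2A+\kappa)$, and a short algebraic manipulation yields
\begin{equation*}
\frac{1-\ell'(t)}{1+\ell'(t)} = \frac{\kappa}{2A},
\end{equation*}
so that $A\bigl((1-\ell'(t))/(1+\ell'(t))\bigr)^{2} = \kappa^2/(4A)$; since $2A > \kappa$ in this regime, this is strictly less than $\kappa/2$. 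In either case, $\tfrac{1}{4}|(\bar{y}_1 + \bar{y}_0')(x)|^2 \leq \tfrac{1}{2}\kappa(\ell(t))$, which, after multiplying by $4$ and substituting back $t = \tau_+^{-1}(T+x)$, is exactly the first inequality of \cref{eq.obs_damp}. The second inequality is obtained by the change of variable $x = \tau_+(t) - T$, using that $\tau_+$ is strictly increasing and that $\tau_+(T) - T = \ell(T) = \bar{\ell}_0$, so the ranges $x \in [0,\bar{\ell}_0]$ and $t \in [\tau_+^{-1}(T), T]$ correspond bijectively.

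I do not expect any real obstacle: the whole argument is a one-line algebraic identity plus a case split on the Griffith condition. The only care needed is the passage between pointwise and a.e.\ statements in the $C^{0,1}$ setting, and checking that $t = \tau_+^{-1}(T+x) \in (0, T]$ whenever $x \in [0,\bar{\ell}_0]$, which is ensured by the standing assumption $T > \ell_0$ from \cref{eq.final_data} together with the monotonicity of $\tau_+$ in \cref{eq.tau}.
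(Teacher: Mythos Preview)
Your proposal is correct and follows essentially the same route as the paper: both arguments use the change of variable $t=\tau_+^{-1}(T+x)$, invoke \cref{eq.final_dataa} to express $\bar{y}_1+\bar{y}_0'$ in terms of $f'$ and $\ell'$, and then split on the Griffith condition \cref{eq.griffith} into the cases $\ell'(t)=0$ and $\ell'(t)>0$, with the same algebra in each case. The only cosmetic difference is that the paper phrases the case split as $2f'(\tau_-(t))^2\le\kappa(\ell(t))$ versus $>$, while you phrase it as $\ell'(t)=0$ versus $>0$; these are equivalent by \cref{eq.griffith}.
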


\begin{proof}
First, observe that the transformation
\begin{equation}
\label{eql.obs_damp_0} t \coloneqq \tau_+^{-1} ( x + T ) \text{,} \qquad x = \tau_+ (t) - T \text{.}
\end{equation}
gives a one-to-one correspondence between $x \in [ 0, \bar{\ell}_0 ]$ and $t \in [ \tau_+^{-1} (T), T ]$, thus it suffices to only prove the second part of \cref{eq.obs_damp}.
For this, we now split the proof into cases.

First, suppose $2 \, f' ( \tau_- (t) )^2 \leq \kappa ( \ell (t) )$.
Then, \cref{eq.griffith} yields $\ell' (t) = 0$, so by \cref{eq.final_dataa},
\begin{align*}
| ( \bar{y}_1 + \bar{y}_0' ) ( \tau_+ (t) - T ) |^2 &= 4 | f' ( \tau_- (t) ) |^2 \\
&\leq 2 \kappa ( \ell (t) ) \text{,}
\end{align*}
where we used the assumption on $f'$ in our last step.

On the other hand, if $2 \, f' ( \tau_- (t) )^2 > \kappa ( \ell (t) )$, then \cref{eq.griffith} and \cref{eq.final_dataa} yield
\begin{align*}
( \bar{y}_1 + \bar{y}_0' ) ( \tau_+ (t) - T ) &= - 2 \, f' ( \tau_- (t) ) \, \tfrac{ 1 - \ell' (t) }{ 1 + \ell' (t) } \\
&= - 2 \, f' ( \tau_- (t) ) \, \frac{ 1 - \frac{ 2 f' ( \tau_- (t) )^2 - \kappa ( \ell (t) ) }{ 2 f' ( \tau_- (t) )^2 + \kappa ( \ell (t) ) } }{ 1 + \frac{ 2 f' ( \tau_- (t) )^2 - \kappa ( \ell (t) ) }{ 2 f' ( \tau_- (t) )^2 + \kappa ( \ell (t) ) } } \\
&= - \tfrac{ \kappa ( \ell (t) ) }{ f' ( \tau_- (t) ) } \text{.}
\end{align*}
The above, along with our assumption on $f'$, immediately yields
\begin{align*}
| ( \bar{y}_1 + \bar{y}_0' ) ( \tau_+ (t) - T ) |^2 &= \tfrac{ \kappa ( \ell (t) )^2 }{ f' ( \tau_- (t) )^2 } \\
&< 2 \, \kappa ( \ell (t) ) \text{.} \qedhere
\end{align*}
\end{proof}

Next, for $C^1$-solutions, we have additional compatibility conditions for the final state.

\begin{proposition} \label{thm.obs_compat}
Let $( \ell, y )$ be a $C^1$-solution.
Then, one of the following holds:
\begin{itemize}
\item $\ell' (T) = 0$, and the following compatibility condition holds:
\begin{equation}
\label{eq.obs_compat_0} \bar{y}_1 ( \bar{\ell}_0 ) = 0 \text{.}
\end{equation}

\item $\ell' (T) > 0$, and the following compatibility condition holds:
\begin{equation}
\label{eq.obs_compat_1} 2 \, \kappa ( \bar{\ell}_0 ) < | \bar{y}_0' ( \bar{\ell}_0 ) |^2 \text{,} \qquad \bar{y}_1 ( \bar{\ell}_0 ) = - \sqrt{ 1 - \tfrac{ 2 \kappa ( \bar{\ell}_0 ) }{ | \bar{y}_0' ( \bar{\ell}_0 ) |^2 } } \cdot \bar{y}_0' ( \bar{\ell}_0 ) \text{.}
\end{equation}
\end{itemize}
\end{proposition}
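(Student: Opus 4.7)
The plan is to derive the two-part compatibility statement directly from the boundary behavior of a $C^1$-solution at the free boundary $x = \ell(t)$, evaluated at $t = T$, combined with the Griffith law in the form \cref{eq.griffith}. Since $y \in C^1$, the second identity in \cref{eq.compat_1}, specialized to $t = T$, reads
\[
\bar{y}_1 ( \bar{\ell}_0 ) + \ell' (T) \, \bar{y}_0' ( \bar{\ell}_0 ) = 0 \text{.}
\]
This single relation immediately handles the first branch: if $\ell'(T) = 0$, then $\bar{y}_1(\bar{\ell}_0) = 0$, which is exactly \cref{eq.obs_compat_0}.

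For the second branch, I would assume $\ell'(T) > 0$. By \cref{eq.griffith}, this forces $2 f'(T - \bar{\ell}_0)^2 > \kappa(\bar{\ell}_0)$ and
\[
\ell' (T) = \frac{ 2 f' ( T - \bar{\ell}_0 )^2 - \kappa ( \bar{\ell}_0 ) }{ 2 f' ( T - \bar{\ell}_0 )^2 + \kappa ( \bar{\ell}_0 ) } \text{.}
\]
The key step is to eliminate $f'(T - \bar{\ell}_0)$. From the first part of \cref{eq.final_dataa} evaluated at $x = \bar{\ell}_0$ together with the displayed identity above, we have
\[
f' ( T - \bar{\ell}_0 ) = \tfrac{1}{2} \bigl( \bar{y}_1 ( \bar{\ell}_0 ) - \bar{y}_0' ( \bar{\ell}_0 ) \bigr) = - \tfrac{ 1 + \ell' (T) }{ 2 } \, \bar{y}_0' ( \bar{\ell}_0 ) \text{.}
\]
Substituting this expression for $f'(T - \bar{\ell}_0)^2$ into the Griffith equality gives a purely algebraic relation between $\ell'(T)$, $\bar{y}_0'(\bar{\ell}_0)$, and $\kappa(\bar{\ell}_0)$.

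Solving this relation is then a short computation: cross-multiplying and simplifying yields
\[
\bigl( 1 - \ell' (T)^2 \bigr) | \bar{y}_0' ( \bar{\ell}_0 ) |^2 = 2 \, \kappa ( \bar{\ell}_0 ) \text{,}
\]
which (since $0 < \ell'(T) < 1$ by \cref{eq.timelike}) gives the strict inequality $2 \kappa ( \bar{\ell}_0 ) < | \bar{y}_0' ( \bar{\ell}_0 ) |^2$ and the explicit formula $\ell'(T) = \sqrt{\, 1 - 2 \kappa ( \bar{\ell}_0 ) / | \bar{y}_0' ( \bar{\ell}_0 ) |^2 \,}$. Plugging this back into $\bar{y}_1 ( \bar{\ell}_0 ) = - \ell'(T) \, \bar{y}_0' ( \bar{\ell}_0 )$ produces \cref{eq.obs_compat_1}.

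The main (and only) subtlety I anticipate is the sign in the square root: because $\ell'(T)$ must lie in $(0,1)$, one must take the positive root when extracting $\ell'(T)$ from $\ell'(T)^2$, and this is what fixes the sign of $\bar{y}_1(\bar{\ell}_0)$ relative to $\bar{y}_0'(\bar{\ell}_0)$ in \cref{eq.obs_compat_1}. Everything else is a direct application of \cref{eq.compat_1}, \cref{eq.griffith}, and the boundary representation in \cref{eq.final_dataa}; no further analytic input is needed.
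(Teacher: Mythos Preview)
Your proof is correct and follows essentially the same approach as the paper: both use the boundary relation $\bar{y}_1(\bar{\ell}_0) = -\ell'(T)\,\bar{y}_0'(\bar{\ell}_0)$ from \cref{eq.compat_1}, express $f'(T-\bar{\ell}_0)$ in terms of the final data via \cref{eq.final_dataa}, and substitute into \cref{eq.griffith} to solve for $\ell'(T)$. The only difference is cosmetic: you use the \emph{first} identity in \cref{eq.final_dataa} to get $f'(T-\bar{\ell}_0) = -\tfrac{1+\ell'(T)}{2}\bar{y}_0'(\bar{\ell}_0)$ directly, which lets the substitution collapse immediately to $(1-\ell'(T)^2)|\bar{y}_0'(\bar{\ell}_0)|^2 = 2\kappa(\bar{\ell}_0)$, whereas the paper uses the second identity and passes through a cubic in $\ell'(T)$ that then factors to the same conclusion.
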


\begin{proof}
For convenience, we define the shorthands
\begin{equation}
\label{eql.obs_compat_0} L \coloneqq \ell' (T) \text{,} \qquad Y \coloneqq \tfrac{ 2 \kappa ( \bar{\ell}_0 ) }{ | \bar{y}_0' ( \bar{\ell}_0 ) |^2 } \text{.}
\end{equation}
By \cref{eq.compat_1}, \cref{eq.timelike}, and \cref{eq.final_data}, we have
\begin{equation}
\label{eql.obs_compat_1} \bar{y}_1 ( \bar{\ell}_0 ) = - L \, \bar{y}_0' ( \bar{\ell}_0 ) \text{,} \qquad 0 \leq L < 1 \text{,}
\end{equation}
thus it suffices to find all possible values of $L$.

First, if $L = 0$, then \cref{eql.obs_compat_0} immediately implies \cref{eq.obs_compat_0}, proving the first case.
Observe also that this scenario is consistent with \cref{eq.griffith}, since by \cref{eq.final_dataa},
\begin{align*}
L &= \max \Big[ \tfrac{ 2 f' ( \tau_- (T) )^2 - \kappa ( \bar{\ell}_0 ) }{ 2 f' ( \tau_- (T) )^2 + \kappa ( \bar{\ell}_0 ) }, 0 \Big] \\
&= \max \Big[ \tfrac{ \frac{1}{2} | ( \bar{y}_1 + \bar{y}_0' ) ( \bar{\ell}_0 ) |^2 ( 1 + L )^2 - \kappa ( \bar{\ell}_0 ) ( 1 - L )^2 }{ \frac{1}{2} | ( \bar{y}_1 + \bar{y}_0' ) ( \bar{\ell}_0 ) |^2 ( 1 + L )^2 + \kappa ( \bar{\ell}_0 ) ( 1 - L )^2 }, 0 \Big] \text{,}
\end{align*}
and \cref{thm.obs_damp} implies $L = 0$ indeed solves the above equation.

On the other hand, if $L > 0$, then \cref{eq.griffith}, \cref{eq.final_dataa}, and \cref{eql.obs_compat_1} yield
\begin{align*}
L &= \tfrac{ \frac{1}{2} | ( \bar{y}_1 + \bar{y}_0' ) ( \bar{\ell}_0 ) |^2 \frac{ ( 1 + L )^2 }{ ( 1 - L )^2 } - \kappa ( \bar{\ell}_0 ) }{ \frac{1}{2} | ( \bar{y}_1 + \bar{y}_0' ) ( \bar{\ell}_0 ) |^2 \frac{ ( 1 + L )^2 }{ ( 1 - L )^2 } + \kappa ( \bar{\ell}_0 ) } \\
&= \tfrac{ \frac{1}{2} | \bar{y}_0' ( \bar{\ell}_0 ) |^2 ( 1 + L )^2 - \kappa ( \bar{\ell}_0 ) }{ \frac{1}{2} | \bar{y}_0' ( \bar{\ell}_0 ) |^2 ( 1 + L )^2 + \kappa ( \bar{\ell}_0 ) } \\
&= \tfrac{ ( 1 + L )^2 - Y }{ ( 1 + L )^2 + Y } \text{.}
\end{align*}
Multiplying the above by the right-hand denominator yields the cubic equation
\begin{align*}
0 &= L [ ( 1 + L )^2 + Y ] - [ ( 1 + L )^2 - Y ] \\
&= L^3 + L^2 - ( 1 - Y ) L - ( 1 - Y ) \\
&= ( 1 + L ) [ L^2 - ( 1 - Y ) ] \text{.}
\end{align*}
Since $1 + L \neq 0$ by \cref{eql.obs_compat_1}, the above yields
\begin{equation}
\label{eql.obs_compat_2} 0 < L^2 = 1 - Y \text{,} \qquad 0 < L = \sqrt{ 1 - Y } \text{.}
\end{equation}
In particular, \cref{eql.obs_compat_2} has a solution only when $Y < 1$, which gives precisely the first part of \cref{eq.obs_compat_1}.
Combining \cref{eql.obs_compat_1} and \cref{eql.obs_compat_2} yields the second part of \cref{eq.obs_compat_1}.
\end{proof}

To summarize, any final data $( \bar{\ell}_0, \bar{y}_0, \bar{y}_1 )$ for a solution of \crefrange{eq.debonding_y}{eq:griff_G} must necessarily satisfy the bound \cref{eq.obs_damp}.
Moreover, in the setting of $C^1$-solutions:
\begin{itemize}
\item One possibility is that $\ell' (T) = 0$, and \cref{eq.obs_compat_0} holds; we call this a \emph{passive final state}.

\item For sufficiently non-small final data---i.\,e.,  satisfying the first part of \cref{eq.obs_compat_1}---it is also possible that $\ell' (T) > 0$.
In this case, $\ell' (T)$ is a unique positive value, given by the second part of \cref{eq.obs_compat_1}; we call this an \emph{active final state}.
\end{itemize}
However, for $C^{ 0, 1 }$-solutions, no such restriction of $\bar{y}_1$ and $\bar{y}_0'$ exists due to derivative discontinuities, and one does not have well-defined notions of active or passive final states.

\subsection{The Initial Branch}
\label{sec:admissible_initial}

We now identify the parts of the boundary curve $\ell$ whose dynamics are entirely determined by the initial data.
Since $f$ is independent of $u$ on $[ -\ell_0, 0 ]$, then \cref{eq.f} and \cref{eq.griffith} imply $\ell (t)$ is independent of $u$ until $\tau_- (t) > 0$.
This motivates the following:

\begin{definition} \label{def.initial_branch}
We define the quantities $( t_\ast, \ell_\ast, \ell_\ast' ) \coloneqq ( t_\ast, \ell_\ast, \ell_\ast' ) ( \ell_0, y_0, y_1 )$ by
\begin{equation}
\label{eq.initial_branch} \begin{cases}
t_\ast \coloneqq \tau_-^{-1} (0) \text{,} \\
\ell_\ast \coloneqq \ell ( t_\ast ) = t_\ast \text{,} \\
\ell_\ast' \coloneqq \ell' ( t_\ast ) \text{,} & \text{$C^1$-solutions only.}
\end{cases}
\end{equation}
\end{definition}

\begin{remark}
In practice, $( t_\ast, \ell_\ast, \ell_\ast' )$ can be obtained from $( \ell_0, y_0, y_1 )$ by solving \cref{eq.f} and \cref{eq.griffith} until the time $t_\ast$ for which $\ell ( t_\ast ) = t_\ast$, and then by setting $\ell_\ast \coloneqq t_\ast$ and $\ell_\ast' \coloneqq \ell' ( t_\ast )$.
\end{remark}

\subsection{The Final Branch}
\label{sec:admissible_final}

Next, we can similarly identify the parts of $\ell$ whose dynamics are entirely determined by the final data, however the process is considerably less straightforward.

Here, the idea is to solve \cref{eq.griffith} backwards from $t = T$, given final data $( \bar{y}_0, \bar{y}_1 )$.
More specifically, given $t \in [ \tau_+^{-1} (T), T ]$, we obtain, from \cref{eq.griffith} and \cref{eq.final_dataa},
\begin{align}
\label{eql.final_branch_0} \ell' (t) &= \max \Bigg[ \tfrac{ \frac{1}{2} | ( \bar{y}_1 + \bar{y}_0' ) ( \tau_+ (t) - T ) |^2 \frac{ ( 1 + \ell' (t) )^2 }{ ( 1 - \ell' (t) )^2 } - \kappa ( \ell (t) ) }{ \frac{1}{2} | ( \bar{y}_1 + \bar{y}_0' ) ( \tau_+ (t) - T ) |^2 \frac{ ( 1 + \ell' (t) )^2 }{ ( 1 - \ell' (t) )^2 } + \kappa ( \ell (t) ) }, \, 0 \Bigg] \\
\notag &= \max \Big[ \tfrac{ | ( \bar{y}_1 + \bar{y}_0' ) ( \tau_+ (t) - T ) |^2 ( 1 + \ell' (t) )^2 - 2 \kappa ( \ell (t) ) ( 1 - \ell' (t) )^2 }{ | ( \bar{y}_1 + \bar{y}_0' ) ( \tau_+ (t) - T ) |^2 ( 1 + \ell' (t) )^2 + 2 \kappa ( \ell (t) ) ( 1 - \ell' (t) )^2 }, \, 0 \Big]
\end{align}
Owing to \cref{thm.obs_damp}, $\ell' (t) \coloneqq 0$ solves \cref{eql.final_branch_0} at every $t \in [ \tau_+^{-1} (T), T ]$.

To find other solutions to \cref{eql.final_branch_0}, we consider the remaining case $\ell' (t) > 0$.
Setting
\begin{equation}
\label{eql.final_branch_1} L \coloneqq \ell' (t) \text{,} \qquad Y \coloneqq | ( \bar{y}_1 + \bar{y}_0' ) ( \tau_+ (t) - T ) |^2 \text{,} \qquad K \coloneqq \kappa ( \ell (t) ) \text{,}
\end{equation}
then \cref{eql.final_branch_0} reduces to the algebraic equation
\[
L = \tfrac{ Y ( 1 + L )^2 - K ( 1 - L )^2 }{ Y ( 1 + L )^2 + K ( 1 - L )^2 } \text{,} \qquad Y ( 1 + L )^2 - K ( 1 - L )^2 > 0 \text{.}
\]
Rearranging the above yields a cubic equation and a quadratic inequality for $L$:
\begin{align}
\label{eql.final_branch_2} 0 &= L [ Y ( 1 + L )^2 + K ( 1 - L )^2 ] - [ Y ( 1 + L )^2 - K ( 1 - L )^2 ] \\
\notag &= ( Y + K ) L^3 + ( Y - K ) L^2 - ( Y + K ) L - ( Y - K ) \\
\notag &= [ ( Y + K ) L + ( Y - K ) ] ( L^2 - 1 ) \text{,} \\
\notag 0 &< ( Y - K ) L^2 + 2 ( Y + K ) L + ( Y - K ) \text{.}
\end{align}
Since $L^2 - 1 \neq 0$ by \cref{eq.timelike}, then the unique solution to the equation in \cref{eql.final_branch_2} is
\begin{equation}
\label{eql.final_branch_10} L = \frac{ K - Y }{ K + Y } \in [ 0, 1 ] \text{,}
\end{equation}
while the inequality in \cref{eql.final_branch_2} then expands to
\begin{align*}
0 &< - \tfrac{ ( K - Y )^3 }{ ( K + Y )^2 } + ( K - Y ) \\
&= \tfrac{ ( K - Y ) [ ( K + Y )^2 - ( K - Y )^2 ] }{ ( K + Y )^2 } \text{,}
\end{align*}
which holds as long as $Y \neq 0$, which by \cref{eql.final_branch_10} is equivalent to $L < 1$.

Unpacking the above analysis, we see that while $\ell (t)$ depends only on the final data for every $t \in [ \tau_+^{-1} (T), T ]$, there could be zero, one, or multiple possibilities for $\ell$.
In particular, in order to produce a solution, we need a curve $\ell$ satisfying, for $t \in [ \tau_+^{-1} (T), T ]$,
\begin{align*}
\ell' (t) &\in \Big\{ 0, \, \tfrac{ 2 \kappa ( \ell (t) ) - | ( \bar{y}_1 + \bar{y}_0' ) ( t + \ell (t) - T ) |^2 }{ 2 \kappa ( \ell (t) ) + | ( \bar{y}_1 + \bar{y}_0' ) ( t + \ell (t) - T ) |^2 } \Big\} \cap [ 0, 1 ) \text{,} \\
2 \kappa ( \ell (t) ) &\geq | ( \bar{y}_1 + \bar{y}_0' ) ( t + \ell (t) - T ) |^2 \text{,}
\end{align*}
in either the $C^{ 0, 1 }$-sense or the $C^1$-sense.
In particular, the second condition above is a constraint on the size of (part of) the final data, while the first condition is a differential inclusion for $\ell'$.
All of this motivates the following definitions, which in tandem characterize the possible final data sets and the possible final branches of $\ell$.

\begin{definition} \label{def.final_set}
Given any $\bar{\ell}_0 \geq \ell_0$ and $( \bar{y}_0, \bar{y}_1 ) \in ( C^{ 0, 1 } \times L^\infty ) [ 0, \bar{\ell}_0 ]$, we say the triple $( \bar{\ell}_0, \bar{y}_0, \bar{y}_1 )$ is a $C^{ 0, 1 }$-\emph{admissible final data set} iff the following condition holds:
\begin{equation}
\label{eq.final_set_0} \bar{y}_0 ( \bar{\ell}_0 ) = 0 \text{.}
\end{equation}
If in addition $( \bar{y}_0, \bar{y}_1 ) \in ( C^1 \times C^0 ) [ 0, \bar{\ell}_0 ]$, then we call $( \bar{\ell}_0, \bar{y}_0, \bar{y}_1 )$ a $C^1$-\emph{admissible final data set} iff, in addition to \cref{eq.final_set_0}, the following condition holds:
\begin{equation}
\label{eq.final_set_1} \bar{y}_1 ( \bar{\ell}_0 ) = -\alpha \, \bar{y}_0' ( \bar{\ell}_0 ) \text{,} \qquad \alpha \in \bigg\{ 0, \sqrt{ 1 - \tfrac{ 2 \kappa ( \bar{\ell}_0 ) }{ | \bar{y}_0 ( \bar{\ell}_0 ) |^2 } } \bigg\} \cap [ 0, 1 ) \text{.}
\end{equation}
\end{definition}

\begin{definition} \label{def.final_branch}
Let $T > \ell_0$ and $0 < \bar{t}_\ast < T$.
\begin{itemize}
\item Given a $C^{ 0, 1 }$-admissible final data set $( \bar{\ell}_0, \bar{y}_0, \bar{y}_1 )$, we say that $\mathscr{L} \in C^{ 0, 1 } [ \bar{t}_\ast, T ]$ is a $C^{ 0, 1 }$-\emph{admissible final branch} iff the following hold a.\,e.,
\begin{equation}
\label{eq.final_branch_0} \begin{cases}
| ( \bar{y}_1 + \bar{y}_0' ) ( t + \mathscr{L} (t) - T ) |^2 \leq 2 \kappa ( \mathscr{L} (t) ) \text{,} & t \in [ \bar{t}_\ast, T ] \text{,} \\
\mathscr{L}' (t) \in \Big\{ 0, \, \tfrac{ 2 \kappa ( \mathscr{L} (t) ) - | ( \bar{y}_1 + \bar{y}_0' ) ( t + \mathscr{L} (t) - T ) |^2 }{ 2 \kappa ( \mathscr{L} (t) ) + | ( \bar{y}_1 + \bar{y}_0' ) ( t + \mathscr{L} (t) - T ) |^2 } \Big\} \cap [ 0, 1 ) \text{,} & t \in [ \bar{t}_\ast, T ] \text{,}
\end{cases}
\end{equation}
and the following identities hold:
\begin{equation}
\label{eq.final_branchh_0} \mathscr{L} (T) = \bar{\ell}_0 \text{,} \qquad \bar{t}_\ast + \mathscr{L} ( \bar{t}_\ast ) = T \text{.}
\end{equation}

\item Given a $C^1$-admissible final data set $( \bar{\ell}_0, \bar{y}_0, \bar{y}_1 )$, we say $\mathscr{L} \in C^1 [ \bar{t}_\ast, T ]$ is a $C^1$-\emph{admissible final branch} iff \cref{eq.final_branch_0} holds everywhere, the conditions \cref{eq.final_branchh_0} hold, and the following also holds, with $\alpha$ as in \eqref{eq.final_set_1}:
\begin{equation}
\label{eq.final_branch_1} \mathscr{L}' ( T ) = \alpha \text{.}
\end{equation}
\end{itemize}
In both cases above, we define $( \bar{t}_\ast, \bar{\ell}_\ast, \bar{\ell}_\ast' ) \coloneqq ( \bar{t}_\ast, \bar{\ell}_\ast, \bar{\ell}_\ast' ) ( \bar{\ell}_0, \bar{y}_0, \bar{y}_1 )$, with $\bar{t}_\ast$ as above, by
\begin{equation}
\label{eq.ell_asts} \begin{cases}
\bar{\ell}_\ast \coloneqq \mathscr{L} ( \bar{t}_\ast ) \text{,} \\
\bar{\ell}_\ast' \coloneqq \mathscr{L}' ( \bar{t}_\ast ) \text{,} & \text{$C^1$-admissible final branch only.}
\end{cases}
\end{equation}
\end{definition}

\begin{remark}
In practice, one obtains an admissible final branch (if it exists) as follows.
One first solves the differential inclusion from the second part of \cref{eq.final_branch_0}, with final data at $t = T$ given in the first part of \cref{eq.final_branchh_0}.
(For $C^1$-solutions, one must also require that $\mathscr{L}' (t)$ varies continuously, and \cref{eq.final_branch_1} uniquely selects the value of $\mathscr{L}' (T)$.)
The inclusion is solved backwards in $t$ until time $t_\ast$, at which the second part of \cref{eq.final_branchh_0} holds.
Lastly, the first part of \cref{eq.final_branch_0} is an additional constraint that must be satisfied when $\mathscr{L}' (t) = 0$.
\end{remark}

\begin{remark}
Observe that in the above-mentioned process, there are two key obstructions to the existence of an admissible final branch $\mathscr{L}$:
\begin{itemize}
\item The constraint $| ( \bar{y}_1 + \bar{y}_0' ) ( t + \mathscr{L} (t) - T ) |^2 \leq 2 \kappa ( \mathscr{L} (t) )$ may fail to hold if $\mathscr{L}' (t) = 0$.

\item In the $C^1$-setting, if $\mathscr{L}' > 0$, then $| \bar{y}_1 + \bar{y}_0' | \rightarrow 0$ would force $\mathscr{L}' \rightarrow 1$.
\end{itemize}
\end{remark}

The admissible final branch condition, while comprehensive, is unfortunately quite technical, since it involves solving the constrained differential inclusion \cref{eq.final_branch_0}, for which solutions may not exist or be unique.
However, for $C^{ 0, 1 }$- or passive $C^1$-admissible final data, there is an admissible final branch representing a static boundary, provided the appropriate constraint holds:

\begin{proposition} \label{thm.final_branch_static}
Let $X \in \{ C^{ 0, 1 }, C^1 \}$, let $( \bar{\ell}_0, \bar{y}_0, \bar{y}_1 )$ be an $X$-admissible final data set, let $T > \bar{\ell}_0$, and suppose that the following conditions hold:
\begin{equation}
\label{eq.final_branch_static_ass} \begin{cases}
\| \bar{y}_1 + \bar{y}_0' \|_{ L^\infty [ 0, \bar{\ell}_0 ] }^2 \leq 2 \, \kappa ( \bar{\ell}_0 ) \text{,} \\
\bar{y}_1 ( \bar{\ell}_0 ) = 0 \text{,} & \text{$X \in C^1$ only.}
\end{cases}
\end{equation}
Then, the following is an $X$-admissible final branch:
\begin{equation}
\label{eq.final_branch_static} \mathscr{L}: [ T - \bar{\ell}_0, T ] \rightarrow \R \text{,} \qquad \mathscr{L} (t) = \bar{\ell}_0 \text{.}
\end{equation}
Furthermore, the parameters \eqref{eq.ell_asts} in this setting are given by
\begin{equation}
\label{eq.ell_asts_static} \bar{t}_\ast = T - \bar{\ell}_0 \text{,} \qquad \bar{\ell}_\ast = \bar{\ell}_0 \text{,} \qquad \bar{\ell}_\ast' = 0 \text{.}
\end{equation}
\end{proposition}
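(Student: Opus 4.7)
The plan is to directly verify, for each regularity class $X \in \{C^{0,1}, C^1\}$, that the constant function $\mathscr{L}(t) \equiv \bar{\ell}_0$ on $[T - \bar{\ell}_0, T]$ satisfies every clause of \cref{def.final_branch}, and then read off the values in \cref{eq.ell_asts_static}. The regularity $\mathscr{L} \in C^1 \subset C^{0,1}$ is automatic since $\mathscr{L}$ is constant, and the hypothesis $T > \bar{\ell}_0$ guarantees $\bar{t}_\ast \coloneqq T - \bar{\ell}_0 > 0$, so the domain $[\bar{t}_\ast, T]$ is non-degenerate. The two identities in \cref{eq.final_branchh_0} follow by direct substitution: $\mathscr{L}(T) = \bar{\ell}_0$ and $\bar{t}_\ast + \mathscr{L}(\bar{t}_\ast) = (T - \bar{\ell}_0) + \bar{\ell}_0 = T$.

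Next I would verify \cref{eq.final_branch_0}. Since $\mathscr{L}' \equiv 0$, the value $0$ trivially lies in the set
\[
\Big\{ 0, \, \tfrac{ 2 \kappa ( \mathscr{L} (t) ) - | ( \bar{y}_1 + \bar{y}_0' ) ( t + \mathscr{L} (t) - T ) |^2 }{ 2 \kappa ( \mathscr{L} (t) ) + | ( \bar{y}_1 + \bar{y}_0' ) ( t + \mathscr{L} (t) - T ) |^2 } \Big\} \cap [ 0, 1 ) \text{,}
\]
so the differential inclusion holds everywhere (and hence, a fortiori, almost everywhere for the $C^{0,1}$ case). For the size constraint in the first line of \cref{eq.final_branch_0}, note that for any $t \in [T - \bar{\ell}_0, T]$ we have $t + \mathscr{L}(t) - T = t + \bar{\ell}_0 - T \in [0, \bar{\ell}_0]$, so the first hypothesis of \cref{eq.final_branch_static_ass} gives
\[
| ( \bar{y}_1 + \bar{y}_0' ) ( t + \bar{\ell}_0 - T ) |^2 \leq \| \bar{y}_1 + \bar{y}_0' \|_{ L^\infty [ 0, \bar{\ell}_0 ] }^2 \leq 2 \kappa ( \bar{\ell}_0 ) = 2 \kappa ( \mathscr{L} (t) ) \text{.}
\]

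For the $C^1$ case I would then check the boundary condition \cref{eq.final_branch_1}, namely $\mathscr{L}'(T) = \alpha$ for the value $\alpha$ appearing in \cref{eq.final_set_1}. Since $\mathscr{L}'(T) = 0$, it suffices to justify the choice $\alpha = 0$. Here the second hypothesis of \cref{eq.final_branch_static_ass}, $\bar{y}_1(\bar{\ell}_0) = 0$, combined with \cref{eq.final_set_1} forces $\alpha \, \bar{y}_0'(\bar{\ell}_0) = 0$: if $\bar{y}_0'(\bar{\ell}_0) \neq 0$ then $\alpha = 0$ is the unique admissible selection, while if $\bar{y}_0'(\bar{\ell}_0) = 0$ we may select $\alpha = 0$ from the admissible set (which always contains $0$). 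Finally, the values in \cref{eq.ell_asts_static} follow by direct evaluation from \cref{eq.ell_asts}. There is no real obstacle here: the proposition amounts to a bookkeeping check. The only subtle point is ensuring that the choice of $\alpha$ in the $C^1$ case is consistent with the static branch $\mathscr{L}'(T) = 0$, which is precisely what the compatibility hypothesis $\bar{y}_1(\bar{\ell}_0) = 0$ in \cref{eq.final_branch_static_ass} is designed to enforce.
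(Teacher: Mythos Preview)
Your proof is correct and follows essentially the same approach as the paper's own proof: both verify the clauses of \cref{def.final_branch} directly, using $\mathscr{L}' \equiv 0$ for the differential inclusion and \cref{eq.final_branchh_0}, the $L^\infty$ hypothesis for the size constraint, and the condition $\bar{y}_1(\bar{\ell}_0) = 0$ to force $\alpha = 0$ in the $C^1$ case. Your version is slightly more detailed (in particular, your case split on whether $\bar{y}_0'(\bar{\ell}_0)$ vanishes makes explicit a point the paper glosses over), but the argument is the same.
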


\begin{proof}
Since $\mathscr{L}' \equiv 0$, it follows that $\mathscr{L}$ satisfies the second part of \cref{eq.final_branch_0} and \cref{eq.final_branchh_0}.
If $X = C^1$, then the second part of \cref{eq.final_branch_static_ass} implies $\alpha = 0$, so \cref{eq.final_branch_1} is satisfied.
The first part of \cref{eq.final_branch_0} now follows from the first part of \cref{eq.final_branch_static_ass}.
Finally, \eqref{eq.ell_asts_static} is a consequence of \eqref{eq.final_branchh_0}, \eqref{eq.ell_asts}, and \eqref{eq.final_branch_static}.
\end{proof}

We will refer to $\mathscr{L}$'s arising from \cref{thm.final_branch_static} as \emph{static final branches}.

\begin{remark}
Observe that if $\kappa$ is a constant function, $\kappa \equiv \kappa_0 > 0$, then the first (constraint) condition in \eqref{eq.final_branch_0} fully decouples from the second (differential inclusion) condition in \eqref{eq.final_branch_0}.
In this case, \eqref{eq.final_branch_0} reduces to more easily checked criteria:
\begin{equation}
\label{eq.final_branch_0a} \begin{cases}
\| \bar{y}_1 + \bar{y}_0' \|_{ L^\infty [ 0, \bar{\ell}_0 ] }^2 \leq 2 \, \kappa_0 \text{,} \\
\mathscr{L}' (t) \in \Big\{ 0, \, \tfrac{ 2 \kappa ( \mathscr{L} (t) ) - | ( \bar{y}_1 + \bar{y}_0' ) ( t + \mathscr{L} (t) - T ) |^2 }{ 2 \kappa ( \mathscr{L} (t) ) + | ( \bar{y}_1 + \bar{y}_0' ) ( t + \mathscr{L} (t) - T ) |^2 } \Big\} \cap [ 0, 1 ) \text{,} & t \in [ \bar{t}_\ast, T ] \text{,}
\end{cases}
\end{equation}
\end{remark}

\section{Controllability} 
\label{sec.control}

This section is dedicated to stating and proving our exact controllability results.
In \cref{sec.control_0}, we treat the case of controlling $C^{ 0, 1 }$-solutions with $C^{ 0, 1 }$-controls, while in \cref{sec.control_1}, we turn our attention toward controlling $C^1$-solutions with $C^1$-controls.

\subsection{\texorpdfstring{$C^{ 0, 1 }$}{C01}-Controllability} \label{sec.control_0}

We begin with the main result for $C^{ 0, 1 }$-solutions:

\begin{theorem}[$C^{ 0, 1 }$-controllability] \label{thm.control_pre}
Let us assume that the following conditions hold:
\begin{enumerate}[label=(\roman*)]
\item Initial data $\ell_0 > 0$ and $( y_0, y_1 ) \in ( C^{ 0, 1 } \times L^\infty ) [ 0, \ell_0 ]$ satisfying
\begin{equation}
\label{eq.control_compat_pre} y_0 ( \ell_0 ) = 0 \text{.}
\end{equation}

\item Final time $T > \ell_0$, and $C^{ 0, 1 }$-admissible final data set $( \bar{\ell}_0, \bar{y}_0, \bar{y}_1 )$.
\end{enumerate}
Moreover, suppose there exists a $C^{ 0, 1 }$-admissible final branch $\mathscr{L} \in C^{ 0, 1 } [ \bar{t}_\ast, T ]$ such that
\begin{equation}
\label{eq.control_time_pre} \ell_\ast \leq \bar{\ell}_\ast < \bar{t}_\ast \text{.}
\end{equation}
Then, there is a control $u \in C^{ 0, 1 } [ 0, T ]$ such that the solution $( y, \ell )$ of \crefrange{eq.debonding_y}{eq:griff_G} satisfies
\begin{equation}
\label{eq.control_pre} \ell (T) = \bar{\ell}_0 \text{,} \qquad y ( T, \cdot ) = \bar{y}_0 \text{,} \qquad \partial_t y ( T, \cdot ) = \bar{y}_1 \text{.}
\end{equation}
\end{theorem}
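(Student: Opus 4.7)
The plan is to build a control $u \in C^{0,1}[0, T]$ explicitly, using the $(\ell, f)$-reformulation of \crefrange{eq.debonding_y}{eq:griff_G} described in \cref{ssec:representation}. Concretely, I would prescribe $\ell: [0, T] \to \R$ and $f: [-\ell_0, T] \to \R$ piecewise on three consecutive sub-intervals, read off $u$ from \cref{eq.f} and \cref{eq.ff}, and then apply the well-posedness \cref{thm.wp_pre} to confirm that the resulting solution is exactly the prescribed pair. The three stages are: an initial branch $[0, t_\ast]$ on which $\ell$ is already fixed by $(\ell_0, y_0, y_1)$; an inflation stage $[t_\ast, \bar t_\ast]$ on which $\ell$ must be driven from $\ell_\ast$ to $\bar\ell_\ast$; and the final branch $[\bar t_\ast, T]$ on which we impose $\ell = \mathscr{L}$.

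In the inflation stage, the identities $t_\ast = \ell_\ast$ and $\bar t_\ast = T - \bar\ell_\ast$ (from \cref{def.initial_branch} and \cref{eq.final_branchh_0}) turn the hypothesis \cref{eq.control_time_pre} into the strict inequality $\bar\ell_\ast - \ell_\ast < \bar t_\ast - t_\ast$. Thus any Lipschitz curve---for instance the affine interpolant---joining $(t_\ast, \ell_\ast)$ to $(\bar t_\ast, \bar\ell_\ast)$ with slope in $[0, 1)$ almost everywhere is an admissible candidate for $\ell$ on $[t_\ast, \bar t_\ast]$. Inverting \cref{eq.griffith} then pins down $|f'(\tau_-(t))|$ almost everywhere in terms of this $\ell'$ and $\kappa(\ell)$ (with a slack inequality whenever $\ell' = 0$), and an arbitrary measurable sign choice converts it into a well-defined $f'$ on $\tau_-([t_\ast, \bar t_\ast]) = [0, T - 2\bar\ell_\ast]$. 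On the final branch $[\bar t_\ast, T]$ we set $\ell = \mathscr{L}$ and let the second line of \cref{eq.final_dataa} prescribe $f'(\tau_-(t))$ directly; the admissibility of $\mathscr{L}$ in the sense of \cref{def.final_branch} is exactly the algebraic statement that this prescription is consistent with \cref{eq.griffith}, which is the technical heart of the construction. Finally, on $[T - \bar\ell_0, T]$---outside the cone of influence of the debonding front within $[0, T]$---we simply set $f'(s) = \tfrac{1}{2}(\bar y_1 - \bar y_0')(T - s)$.

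The control $u$ is then recovered from $f$ in order: directly on $(0, \ell_0]$ from the middle line of \cref{eq.f}, using the initial compatibility $u(0) = y_0(0)$; and iteratively on $(\ell_0, T]$ from its last line, since $\ell$ and the earlier values of $f$ are already known at each step. The resulting $u$ is Lipschitz with matching joints, so $u \in C^{0,1}[0, T]$ and the compatibility conditions \cref{eq.wp_compat_pre} hold. Applying \cref{thm.wp_pre} to this $u$ yields the unique $C^{0,1}$-solution, whose $(\ell, f)$ must then agree with the pair we built. Hence $\ell(T) = \mathscr{L}(T) = \bar\ell_0$, while the two lines of \cref{eq.final_dataa} recover both characteristic derivatives of $y$ at time $T$; integrating from the boundary datum $\bar y_0(\bar\ell_0) = 0$ of \cref{eq.final_set_0} then reconstructs $(\bar y_0, \bar y_1)$ on $[0, \bar\ell_0]$. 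I expect the main obstacle to be ensuring self-consistency in the final-branch stage, which is exactly what \cref{def.final_branch} and the inequality $\bar\ell_\ast < \bar t_\ast$ in \cref{eq.control_time_pre} are designed to guarantee.
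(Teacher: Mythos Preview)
Your proposal is correct and follows essentially the same three-stage construction as the paper: linear inflation of $\ell$ from $\ell_\ast$ to $\bar\ell_\ast$ on $[t_\ast,\bar t_\ast]$ with $f'$ read off from \cref{eq.griffith}, then $\ell=\mathscr{L}$ on $[\bar t_\ast,T]$ with $f'$ prescribed by the second line of \cref{eq.final_dataa} (consistency with Griffith being precisely \cref{eq.final_branch_0}), and finally $f'(s)=\tfrac12(\bar y_1-\bar y_0')(T-s)$ on $[T-\bar\ell_0,T]$; the control $u$ is recovered from \cref{eq.ff} throughout. The only cosmetic difference is that the paper fixes a continuous branch of $f'$ in the inflation stage rather than an arbitrary measurable sign, and carries out explicitly the cubic computation showing that the final-branch prescription solves \cref{eq.griffith}.
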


\begin{remark}
The first inequality of \cref{eq.control_time_pre} is necessary for controllability.
In particular, since the domain size $\ell$ is always non-decreasing, then the size of $\ell_\ast$ at the end of the initial branch must be at most the size $\bar{\ell}_\ast$ at the start of the final branch.
\end{remark}

\begin{remark}
The restriction on the control time in \cref{thm.control_pre} is implicitly contained in \cref{eq.control_time_pre}, since $\bar{t}_\ast$ is computed from $T$ and the final data.
To justify \cref{eq.control_time_pre}, we note it can be rewritten as
\begin{equation}
\label{eq.control_time_pree} \bar{\ell}_\ast - \ell_\ast < \bar{t}_\ast - t_\ast \text{.}
\end{equation}
Since the initial branch of $\ell$ (before time $t_\ast$) and the final branch of $\ell$ (after time $\bar{t}_\ast$) are determined by the initial and final data, our control must expand $\ell$ from $\ell_\ast$ at time $t_\ast$ to $\bar{\ell}_\ast$ at time $\bar{t}_\ast$.
Since $\ell' < 1$, then \cref{eq.control_time_pree} implies that we have just enough time to carry out this domain expansion.
\end{remark}

\begin{remark}
If a $C^{ 0, 1 }$-admissible final branch does not exist for an admissible final data set $( \bar{\ell}_0, \bar{y}_0, \bar{y}_1 )$, then the argument preceding \crefrange{def.final_set}{def.final_branch} shows $( \bar{\ell}_0, \bar{y}_0, \bar{y}_1 )$ cannot be reachable.
Thus, \cref{thm.control_pre} gives an optimal description of the reachable states.
\end{remark}

\begin{proof}[Proof of \cref{thm.control_pre}.]
Our assumptions and \cref{thm.wp_pre} yield the existence of the $C^{ 0, 1 }$-solution $( \ell, y )$ of \crefrange{eq.debonding_y}{eq:griff_G}, provided we construct $u \in C^{ 0, 1 } [ 0, T ]$ satisfying
\begin{equation}
\label{eql.control_pre_0} u (0) = y_0 (0) \text{.}
\end{equation}
Setting $u (0)$ as in \cref{eql.control_pre_0}, it suffices to define $u' \in L^\infty [ 0, T ]$, as $u$ is then determined by integrating from $t = 0$.
We also recall the quantities $t_\ast = \ell_\ast$ from \cref{def.initial_branch}, and we recall in particular that $\ell$ is independent of $u$, and hence already determined, on $[ 0, t_\ast ]$.

\uline{Step 1.}
Our first goal is to expand the domain to the starting size $\bar{\ell}_\ast$ of the final branch.
For this, we define (while recalling \cref{eq.initial_branch} and \cref{eq.control_time_pre}) the constant
\begin{equation}
\label{eql.control_pre_1} v \coloneqq \tfrac{ \bar{\ell}_\ast - \ell_\ast }{ \bar{t}_\ast - t_\ast } \in [ 0, 1 ) \text{,}
\end{equation}
and we now \emph{prescribe} $\ell$ on $( t_\ast, \bar{t}_\ast ]$ to be the linear function
\begin{equation}
\label{eql.control_pre_2} \ell (t) \coloneqq \ell_\ast + v ( t - t_\ast ) \text{.}
\end{equation}
In particular, the above implies
\begin{equation}
\label{eql.control_pre_3} \ell ( t_\ast ) = \ell_\ast \text{,} \qquad \ell ( \bar{t}_\ast ) = \bar{\ell}_\ast \text{,} \qquad \ell' |_{ ( t_\ast, \bar{t}_\ast ] } \equiv v \text{.}
\end{equation}
For convenience, we define $\tau_\pm$ on $[ 0, \bar{t}_\ast ]$ using \cref{eq.tau}, with the above $\ell$.

With $\ell$ as in \cref{eql.control_pre_2}, we now claim there exist $u' \in L^\infty ( 0, \tau_- ( \bar{t}_\ast ) ]$ such that \cref{eq.f} and \cref{eq.griffith} hold (the latter a.\,e.) for $f \in C^{ 0, 1 } [ -\ell_0, \tau_- ( \bar{t}_\ast ) ]$ and the above extension of $\ell \in C^{ 0, 1 } [ 0, \bar{t}_\ast ]$.

To prove the claim, we note that, for every $0 < s \leq \tau_- ( \bar{t}_\ast )$, the equation \cref{eq.griffith}, with $t \coloneqq \tau_-^{-1} (s)$ and $\ell$ as in \cref{eql.control_pre_2}, yields an algebraic equation for $f' (s)$ with:
\begin{itemize}
\item One or two possible solutions when $v > 0$.

\item Infinitely many solutions---namely, any $f' (s)$ with $2 f' (s)^2 \leq \kappa ( \ell ( \tau_-^{-1} (s) ) )$---when $v = 0$.
\end{itemize}
For simplicity, we can choose the values of $f' (s)$ so that $f'$ varies continuously for $s \in ( 0, \tau_-^{-1} ( \bar{t}_\ast ) ]$.
With $f' (s)$ chosen, then (the second and third parts of) \cref{eq.ff} gives the value of $u' (s)$ needed to achieve the above desired value of $f' (s)$.
(In particular, $f' (s)$ is defined in terms of $u' (s)$ and $f'$ at an earlier time when $s > \ell_0$, so the above can be done iteratively.)

From this, we obtain
\[
u' \in L^\infty ( 0, \tau_- ( \bar{t}_\ast ) ] \text{,} \qquad f' \in L^\infty ( 0, \tau_- ( \bar{t}_\ast ) ]
\]
satisfying \cref{eq.griffith} and \cref{eq.ff}.
Integrating the above (the latter from $f (0) = 0$) yields
\[
u \in C^{ 0, 1 } [ 0, \tau_- ( \bar{t}_\ast ) ] \text{,} \qquad f \in C^{ 0, 1 } [ -\ell_0, \tau_- ( \bar{t}_\ast ) ]
\]
satisfying \cref{eq.f} and \cref{eq.griffith}, hence proving the claim.

\uline{Step 2.}
The next step is to attain the final domain size $\bar{\ell}_0$ and impose half of the final state $( \bar{y}_0, \bar{y}_1 )$.
For this, we aim to prescribe $\ell \in C^{ 0, 1 } ( \bar{t}_\ast, T ]$ so that it matches the given final branch:
\begin{equation}
\label{eql.control_pre_10} \ell (t) \coloneqq \mathscr{L} (t) \text{,} \qquad t \in ( \bar{t}_\ast, T ] \text{.}
\end{equation}
We again extend $\tau_\pm$ to $( \bar{t}_\ast, T ]$ using \cref{eq.tau} and the above $\ell$.

Using the second and third parts of \cref{eq.ff}, we can then solve for $u' \in L^\infty ( \tau_- ( \bar{t}_\ast ), \tau_- (T) ]$ so that $f' \in L^\infty ( \tau_- ( \bar{t}_\ast ), \tau_- (T) ]$ satisfies 
\begin{equation}
\label{eql.control_pre_11} f' (s) \coloneqq - \tfrac{1}{2} ( \bar{y}_1 + \bar{y}_0' ) ( ( \tau_+ \circ \tau_-^{-1} ) (s) - T ) \, \tfrac{ 1 + \ell' ( \tau_-^{-1} (s) ) }{ 1 - \ell' ( \tau_-^{-1} (s) ) } \text{,} \qquad s \in ( \tau_- ( \bar{t}_\ast ), \tau_- (T) ] \text{,}
\end{equation}
with $\ell$ as in \cref{eql.control_pre_10}.
The expression in \cref{eql.control_pre_11} is well-defined, since the right-hand side of its equation is an $L^\infty$-function of $s$ by \crefrange{def.final_set}{def.final_branch}, and since $s \in ( \tau_- ( \bar{t}_\ast ), \bar{\tau}_- (T) ]$ implies
\[
\tau_-^{-1} (s) \in ( \bar{t}_\ast, T ] \text{,} \qquad ( \tau_+ \circ \tau_-^{-1} ) (s) \in ( T, T + \bar{\ell}_0 ] \text{,}
\]
with the second part of the above following from  \cref{def.final_branch}.

We claim \crefrange{eql.control_pre_10}{eql.control_pre_11} together satisfy \cref{eq.f} and \cref{eq.griffith}.
Observe \cref{eq.f} will be satisfied due to $f'$ being defined via \cref{eq.ff}.
For \cref{eq.griffith}, we divide into cases for (almost) every $t \in ( \bar{t}_\ast, T ]$.
First, if $\ell' (t) = 0$, then (except for a zero-measure subset) by \cref{eq.final_branch_0} and \crefrange{eql.control_pre_10}{eql.control_pre_11},
\begin{align*}
2 | f' ( t - \ell (t) ) |^2 &= \tfrac{1}{2} | ( \bar{y}_1 + \bar{y}_0' ) ( t + \ell (t) - T ) |^2 \\
&\leq | \kappa ( \ell (t) ) | \text{,}
\end{align*}
and it follows that \cref{eq.griffith} holds for this $t$.

Next, if $\ell' (t) > 0$, then \cref{eq.final_branch_0} and \cref{eql.control_pre_11} yield
\begin{align*}
\ell' (t) &= \tfrac{ 2 \kappa ( \ell (t) ) - | ( \bar{y}_1 + \bar{y}_0' ) ( t + \ell (t) - T ) |^2 }{ 2 \kappa ( \ell (t) ) + | ( \bar{y}_1 + \bar{y}_0' ) ( t + \ell (t) - T ) |^2 } \\
&= \tfrac{ \kappa ( \ell (t) ) [ 1 + \ell' (t) ]^2 - 2 | f' ( t - \ell (t) ) |^2 [ 1 - \ell' (t) ]^2 }{ \kappa ( \ell (t) ) [ 1 + \ell' (t) ]^2 + 2 | f' ( t - \ell (t) ) |^2 [ 1 - \ell' (t) ]^2 } \text{.}
\end{align*}
Rearranging the above results in the cubic equation
\[
[ 1 + \ell' (t)^2 ] \{ [ \kappa ( \ell (t) ) + 2 | f' ( t - \ell (t) ) |^2 ] \ell' (t) + [ \kappa ( \ell (t) ) - 2 | f' ( t - \ell (t) ) |^2 ] \} = 0 \text{,}
\]
for which the only real solution is given by
\[
\ell' (t) = \tfrac{ 2 | f' ( t - \ell (t) ) |^2 - \kappa ( \ell (t) ) }{ 2 | f' ( t - \ell (t) ) |^2 + \kappa ( \ell (t) ) } \text{.}
\]
Since $\ell' (t) > 0$, we once again have \cref{eq.griffith}, completing the proof of the claim.

Integrating $u'$ and $f'$ as before, we now have
\[
u \in C^{ 0, 1 } [ 0, \tau_- (T) ] \text{,} \qquad f \in C^{ 0, 1 } [ -\ell_0, \tau_- (T) ] \text{,} \qquad \ell \in C^{ 0, 1 } [ 0, T ]
\]
satisfying \cref{eq.f} and \cref{eq.griffith}.
Moreover, \cref{eq.final_branchh_0} and \cref{eql.control_pre_10} now yield the first part of \cref{eq.control_pre}.

\uline{Step 3.}
The last step is to finish prescribing $( \bar{y}_0, \bar{y}_1 )$.
For this, we extend $u' \in L^\infty ( \tau_- (T), T ]$ so that $f' \in L^\infty ( \tau_- (T), T ]$ satisfies the following identity:
\begin{equation}
\label{eql.control_pre_12} f' (s) \coloneqq \tfrac{1}{2} ( \bar{y}_1 - \bar{y}_0' ) ( T - s ) \text{,} \qquad s \in [ \tau_- (T), T ] = [ T - \bar{\ell}_0, T ] \text{.}
\end{equation}
Integrating $f'$ as before, we have $f \in C^{ 0, 1 } [ -\ell_0, T ]$ that satisfies \cref{eq.f}.

Having now a solution $( \ell, f )$ of \cref{eq.f} and \cref{eq.griffith}, we obtain from \cref{eq.y} a corresponding $C^{ 0, 1 }$-solution $( \ell, y )$ of \crefrange{eq.debonding_y}{eq:griff_G}.
Furthermore, by \cref{eq.yyy}, \cref{eql.control_pre_11}, and \cref{eql.control_pre_12}, we have
\[
( \partial_t \pm \partial_x ) y ( T, x ) = ( \bar{y}_1 \pm \bar{y}_0' ) ( x ) \text{,} \qquad x \in [ 0, \bar{\ell}_0 ] \text{.}
\]
In particular, the above implies
\begin{equation}
\label{eql.control_pre_13} \partial_t y ( T, \cdot ) = \bar{y}_1 \text{,} \qquad \partial_x y ( T, \cdot ) = \bar{y}_0' \text{,}
\end{equation}
proving the third part of \cref{eq.control_pre}.
Finally, combining \cref{eq.compat_0}, \cref{eq.final_set_0}, and the second part of \cref{eql.control_pre_13} yields $y ( T, \cdot ) = \bar{y}_0$, proving the second part of \cref{eq.control_pre} and the theorem.
\end{proof}

\begin{remark} \label{rmk.control_time}
The control time in \cref{thm.control_pre} was computed by first focusing solely on expanding the domain to the start of the given final branch, and then afterwards focusing solely on setting the boundary control to attain the final state $( \bar{y}_0, \bar{y}_1 )$.
One can argue heuristically that this control time should generally be optimal.

The above two steps are already individually achieved in optimal time (the first by taking $v$ in \cref{eql.control_pre_1} to be close to $1$, and the second by \cref{eq.y}).
Moreover, expanding the domain requires an infusion of large values of $u'$ and $f'$.
Thus, if the final data is appropriately small (i.\,e.~requiring setting small $f'$), then one cannot expand the domain and impose $( \bar{y}_0, \bar{y}_1 )$ simultaneously, leading to \cref{thm.control_pre} providng the optimal control time.
\end{remark}

For the special case of static final branches, we can replace the admissible final branch assumption with a constraint condition that is simple to check.

\begin{corollary}[$C^{ 0, 1 }$-controllability, static final branch] \label{thm.control_static_pre}
Let us assume that the following conditions hold:
\begin{enumerate}[label=(\roman*)]
\item Initial data $\ell_0 > 0$ and $( y_0, y_1 ) \in ( C^{ 0, 1 } \times L^\infty ) [ 0, \ell_0 ]$ satisfying
\begin{equation}
\label{eq.control_static_compat_pre} y_0 ( \ell_0 ) = 0 \text{.}
\end{equation}

\item Final time $T > 0$ and final data $\bar{\ell}_0 > 0$, $( \bar{y}_0, \bar{y}_1 ) \in ( C^1 \times C^0 ) [ 0, \bar{\ell}_0 ]$ satisfying
\begin{equation}
\label{eq.control_static_time_pre} \bar{\ell}_0 \geq \ell_\ast \text{,} \qquad T > 2 \bar{\ell}_0 \text{,}
\end{equation}
as well as the compatibility and constraint conditions
\begin{equation}
\label{eq.control_static_const_pre} \bar{y}_0 ( \bar{\ell}_0 ) = 0 \text{,} \qquad \| \bar{y}_1 + \bar{y}_0' \|_{ L^\infty [ 0, \bar{\ell}_0 ] }^2 \leq 2 \, \kappa ( \bar{\ell}_0 ) \text{.}
\end{equation}
\end{enumerate}
Then, there exists a control $u \in C^{ 0, 1 } [ 0, T ]$ so that the solution $( y, \ell )$ of \crefrange{eq.debonding_y}{eq:griff_G} satisfies
\begin{equation}
\label{eq.control_static_pre} \ell (T) = \bar{\ell}_0 \text{,} \qquad y ( T, \cdot ) = \bar{y}_0 \text{,} \qquad \partial_t y ( T, \cdot ) = \bar{y}_1 \text{.}
\end{equation}
\end{corollary}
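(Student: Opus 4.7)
The plan is to deduce this corollary directly from \cref{thm.control_pre} by constructing a static admissible final branch via \cref{thm.final_branch_static} and then verifying the remaining geometric condition \cref{eq.control_time_pre}. There is no new analysis required; the work amounts to translating the assumptions of the corollary into those of the main theorem.

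First, I would check that $(\bar{\ell}_0, \bar{y}_0, \bar{y}_1)$ is a $C^{0,1}$-admissible final data set in the sense of \cref{def.final_set}. The only condition needed at this regularity is \cref{eq.final_set_0}, namely $\bar{y}_0(\bar{\ell}_0) = 0$, which is exactly the first equality of \cref{eq.control_static_const_pre}. (Note that the $C^1$-only condition \cref{eq.final_set_1} plays no role here.)

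Next, I would produce the admissible final branch by applying \cref{thm.final_branch_static}. Its hypotheses \cref{eq.final_branch_static_ass} require, in the $C^{0,1}$-setting, only the size bound
\[
\| \bar{y}_1 + \bar{y}_0' \|_{L^\infty[0,\bar{\ell}_0]}^2 \leq 2\, \kappa(\bar{\ell}_0),
\]
which is the second part of \cref{eq.control_static_const_pre}; the condition $T > \bar{\ell}_0$ is ensured by the stronger assumption $T > 2\bar{\ell}_0$ in \cref{eq.control_static_time_pre}. \cref{thm.final_branch_static} then yields the static final branch $\mathscr{L}(t) \equiv \bar{\ell}_0$ on $[T - \bar{\ell}_0, T]$, with associated parameters (from \cref{eq.ell_asts_static})
\[
\bar{t}_\ast = T - \bar{\ell}_0, \qquad \bar{\ell}_\ast = \bar{\ell}_0.
\]

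Finally, I would verify \cref{eq.control_time_pre}. The left inequality $\ell_\ast \leq \bar{\ell}_\ast$ reduces to $\ell_\ast \leq \bar{\ell}_0$, which is the first part of \cref{eq.control_static_time_pre}. The right inequality $\bar{\ell}_\ast < \bar{t}_\ast$ becomes $\bar{\ell}_0 < T - \bar{\ell}_0$, that is, $T > 2\bar{\ell}_0$, which is the second part of \cref{eq.control_static_time_pre}. With all hypotheses of \cref{thm.control_pre} now satisfied, the conclusion \cref{eq.control_static_pre} follows immediately from that theorem. The only potential subtlety — and it is not really an obstacle — is to notice that the two strict/non-strict time bounds in \cref{eq.control_static_time_pre} have been calibrated precisely to yield both inequalities in \cref{eq.control_time_pre} after substituting the static values of $(\bar{t}_\ast, \bar{\ell}_\ast)$.
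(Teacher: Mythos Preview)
Your proposal is correct and follows essentially the same route as the paper: use \cref{eq.control_static_const_pre} together with \cref{thm.final_branch_static} to obtain the static final branch with $\bar{t}_\ast = T - \bar{\ell}_0$ and $\bar{\ell}_\ast = \bar{\ell}_0$, then verify \cref{eq.control_time_pre} via \cref{eq.control_static_time_pre} and invoke \cref{thm.control_pre}. Your write-up is slightly more explicit (e.g.\ in checking \cref{eq.final_set_0} and the hypothesis $T > \bar{\ell}_0$ of \cref{thm.final_branch_static}), but the argument is identical in substance.
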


\begin{proof}
By \cref{eq.control_static_const_pre}, the function $\mathscr{L}$ given by \cref{eq.final_branch_static} is a $C^{ 0, 1 }$-admissible final branch, with $\bar{t}_\ast = T - \bar{\ell}_0$ and $\bar{\ell}_\ast = \bar{\ell}_0$.
The above and \cref{eq.control_static_time_pre} imply \cref{eq.control_time_pre} holds with $\mathscr{L}$ as above.
Since all the assumptions of \cref{thm.control_pre} hold in our setting, applying \cref{thm.control_pre} results in the desired control.
\end{proof}

\begin{remark}
In particular, \cref{thm.control_static_pre} implies that in the setting of static final branches, the crucial condition for a final state $( \bar{\ell}_0, \bar{y}_0, \bar{y}_1 )$ to be reachable is precisely \cref{eq.control_static_const_pre}.
\end{remark}

\subsection{\texorpdfstring{$C^1$}{C1}-Controllability} \label{sec.control_1}

We now state and prove our main result for $C^1$-solutions:

\begin{theorem}[$C^1$-controllability] \label{thm.control}
Let us assume that the following conditions hold:
\begin{enumerate}[label=(\roman*)]
\item Initial data $\ell_0 > 0$ and $( y_0, y_1 ) \in ( C^1 \times C^0 ) [ 0, \ell_0 ]$ satisfying
\begin{equation}
\label{eq.control_compat} y_0 ( \ell_0 ) = 0 \text{,} \qquad y_1 ( \ell_0 ) = - \max \Bigg[ \frac{ \frac{1}{2} ( y_1 ( \ell_0 ) - y_0' ( \ell_0 ) )^2 - \kappa ( \ell_0 ) }{ \tfrac{1}{2} ( y_1 ( \ell_0 ) - y_0' ( \ell_0 ) )^2 + \kappa ( \ell_0 ) }, \, 0 \Bigg] \, y_0' ( \ell_0 ) \text{.}
\end{equation}

\item Final time $T > \ell_0$, and $C^1$-admissible final data set $( \bar{\ell}_0, \bar{y}_0, \bar{y}_1 )$.
\end{enumerate}
Moreover, suppose there exists a $C^1$-admissible final branch $\mathscr{L} \in C^1 [ \bar{t}_\ast, T ]$ such that
\begin{equation}
\label{eq.control_time} \begin{cases} \ell_\ast \leq \bar{\ell}_\ast \text{,} & \ell_\ast' = \bar{\ell}_\ast' = 0 \text{,} \\ \ell_\ast < \bar{\ell}_\ast \text{,} & \text{otherwise,} \end{cases} \qquad \bar{\ell}_\ast < \bar{t}_\ast \text{.}
\end{equation}
Then, there is a control $u \in C^1 [ 0, T ]$ such that the solution $( y, \ell )$ of \crefrange{eq.debonding_y}{eq:griff_G} satisfies
\begin{equation}
\label{eq.control} \ell (T) = \bar{\ell}_0 \text{,} \qquad y ( T, \cdot ) = \bar{y}_0 \text{,} \qquad \partial_t y ( T, \cdot ) = \bar{y}_1 \text{.}
\end{equation}
\end{theorem}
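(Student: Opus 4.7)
The plan is to follow the three-stage construction used in the proof of \cref{thm.control_pre}, upgrading every piece to $C^1$. Stage 1 inflates $\ell$ from $(t_\ast, \ell_\ast)$ to $(\bar{t}_\ast, \bar{\ell}_\ast)$ via a smooth (not merely linear) profile; Stage 2 sets $\ell = \mathscr{L}$ on $[\bar{t}_\ast, T]$ and uses \cref{eql.control_pre_11} to define $f'$ (and hence $u'$); Stage 3 finishes on $(\tau_-(T), T]$ by \cref{eql.control_pre_12}. In each stage I define $f'$ continuously on the relevant $s$-interval and read $u'$ off from the second and third cases of \cref{eq.ff}; integrating then produces $u \in C^1[0,T]$ and $f \in C^1[-\ell_0, T]$ which, together with the prescribed $\ell \in C^1[0,T]$, satisfy \cref{eq.f} and \cref{eq.griffith}. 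The $y$ given by \cref{eq.y} is then the $C^1$-solution of \crefrange{eq.debonding_y}{eq:griff_G} by \cref{thm.wp}, and the target identities \cref{eq.control} follow from \cref{eq.yyy} and \cref{eq.compat_0} exactly as in the $C^{0,1}$ proof.

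For Stage 1, the linear ansatz $\ell(t) = \ell_\ast + v(t - t_\ast)$ generically fails to match the slopes $\ell_\ast'$ at $t_\ast$ and $\bar{\ell}_\ast'$ at $\bar{t}_\ast$, so I replace it by a $C^1$ interpolant on $[t_\ast, \bar{t}_\ast]$ with the right endpoint values and derivatives and $0 \leq \ell' < 1$. Hypothesis \cref{eq.control_time} is tailored so such an interpolant exists: when both endpoint slopes vanish, the constant $\ell \equiv \ell_\ast = \bar{\ell}_\ast$ works when $\ell_\ast = \bar{\ell}_\ast$ and a plateau with smoothed endpoints works when $\ell_\ast < \bar{\ell}_\ast$; otherwise strict expansion plus a time margin permits a cubic Hermite polynomial (or a linear middle piece glued to short bump-smoothed transitions). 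With $\ell$ so fixed, I invert \cref{eq.griffith} pointwise on $(0, \tau_-(\bar{t}_\ast)]$: where $\ell'(\tau_-^{-1}(s)) > 0$, Griffith determines $|f'(s)|$ uniquely and I pick the sign continuously; where $\ell'(\tau_-^{-1}(s)) = 0$, the freedom $2|f'|^2 \leq \kappa(\ell)$ lets me interpolate $f'$ continuously between the pinned endpoint values. Compatibility \cref{eq.control_compat} forces $f'(0^-) = \tfrac{1}{2}(y_1(0) - y_0'(0))$, and matching $f'(0^+)$ to this value produces (via \cref{eq.ff}) $u'(0^+) = y_1(0)$, so $u$ is $C^1$ at $t = 0$.

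Stages 2 and 3 reuse the formulas of the $C^{0,1}$ proof; the extra $C^1$-work is to verify $C^0$-continuity of $f'$ across $s = \tau_-(\bar{t}_\ast)$ and $s = \tau_-(T)$. At $s = \tau_-(\bar{t}_\ast)$, the identity $\bar{t}_\ast + \bar{\ell}_\ast = T$ from \cref{eq.final_branchh_0} causes both the Stage 1 Griffith value of $|f'|$ and the value dictated by \cref{eql.control_pre_11} to equal $\tfrac{1}{2}\,|(\bar{y}_1 + \bar{y}_0')(0)| \cdot (1+\bar{\ell}_\ast')/(1-\bar{\ell}_\ast')$—this is where the differential inclusion of \cref{eq.final_branch_0} is used—and I arrange the Stage 1 sign to match. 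At $s = \tau_-(T) = T - \bar{\ell}_0$, direct computation gives
\[
f'(\tau_-(T)^-) = -\tfrac{1}{2}(\bar{y}_1 + \bar{y}_0')(\bar{\ell}_0) \cdot \tfrac{1 + \alpha}{1 - \alpha}, \qquad f'(\tau_-(T)^+) = \tfrac{1}{2}(\bar{y}_1 - \bar{y}_0')(\bar{\ell}_0),
\]
with $\alpha = \mathscr{L}'(T)$; substituting $\bar{y}_1(\bar{\ell}_0) = -\alpha\, \bar{y}_0'(\bar{\ell}_0)$ from the $C^1$-admissibility condition \cref{eq.final_set_1} reduces both sides to $-\tfrac{1}{2}(1 + \alpha)\bar{y}_0'(\bar{\ell}_0)$, giving continuity. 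This is precisely the role played by \cref{eq.final_set_1}.

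The main obstacle is the joint matching at $s = \tau_-(\bar{t}_\ast)$, where the prescribed slope $\bar{\ell}_\ast' = \mathscr{L}'(\bar{t}_\ast)$, the Griffith value of $|f'|$, and the sign of $f'$ must all be rendered simultaneously consistent; the notion of admissible final branch (\cref{def.final_branch}) is engineered exactly so that this is possible. The strict inequalities in \cref{eq.control_time}, in turn, provide the geometric room needed to realize a $C^1$ inflation profile with the prescribed endpoint slopes while keeping $\ell' < 1$.
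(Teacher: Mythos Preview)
Your overall architecture matches the paper's proof exactly, and your handling of the junctions at $s=\tau_-(\bar t_\ast)$ and $s=\tau_-(T)$ is correct (modulo a harmless sign convention in the matching value at $\tau_-(\bar t_\ast)$). The genuine gap is in Stage~1, in the case $\ell_\ast'>0$ and $\bar\ell_\ast'>0$. Your proposed inflation profiles there---a cubic Hermite interpolant, or a linear middle segment with smoothed ends---have $\ell'>0$ on all of $(t_\ast,\bar t_\ast)$. On any interval where $\ell'>0$, inverting \cref{eq.griffith} forces
\[
|f'(\tau_-(t))|^2=\frac{\kappa(\ell(t))\,(1+\ell'(t))}{2\,(1-\ell'(t))}\ \geq\ \frac{c_1}{2}>0,
\]
so $f'$ is bounded away from $0$ and, being continuous, cannot change sign. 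But the sign of $f'(0^+)=f'(0^-)=\tfrac12(y_1-y_0')(0)$ is dictated by the initial data, while the sign of $f'(\tau_-(\bar t_\ast))$ is dictated by $-(\bar y_1+\bar y_0')(0)$ via the Stage~2 match; these can perfectly well be opposite. Your sentence ``where $\ell'=0$, the freedom $2|f'|^2\le\kappa(\ell)$ lets me interpolate'' is exactly the right mechanism, but in this case your profile provides no such region.

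The paper's fix is simple and worth adopting: regardless of the endpoint slopes, always insert a short flat plateau $\ell'\equiv 0$ on a middle interval $[t_\circ-\delta,t_\circ+\delta]\subset(t_\ast,\bar t_\ast)$ (the time margin $\bar t_\ast-t_\ast>\bar\ell_\ast-\ell_\ast$ from \cref{eq.control_time} guarantees there is room to do this while keeping $0\le\ell'<1$). On that plateau the Griffith relation degenerates to the inequality $2|f'|^2\le\kappa(\ell)$, so $f'$ can be steered continuously through $0$ to reconcile whatever signs are forced at the two ends. The paper then treats the four sub-cases of which of $\ell_\ast',\bar\ell_\ast'$ vanish only to decide whether additional short flat buffers are needed at the endpoints themselves (so that $f'$ can be launched from, or landed on, the threshold value $2|f'|^2=\kappa(\ell)$ when the corresponding endpoint slope is $0$). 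Once you add the mandatory middle plateau to your construction, the rest of your argument goes through.
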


\begin{remark}
Observe that when $\ell_\ast = \bar{\ell}_\ast$, the extra assumption $\ell_\ast' = \bar{\ell}_\ast' = 0$ in \cref{eq.control_time} is necessary.
In particular, when $\ell_\ast' > 0$ or $\bar{\ell}_\ast' > 0$, the continuity of $\ell' \geq 0$ implies $\ell_\ast < \bar{\ell}_\ast$.
\end{remark}

\begin{remark}
\cref{thm.control} extends to slightly more regular solutions, such as the $C^{ 1, 1 }$-solutions from \cite[Theorem 3.1]{MR3542962}.
The proof is essentially the same as for \cref{thm.control}, except one has to verify that the quantities constructed are also $C^{ 1, 1 }$.
On the other hand, the entire theory fails to extend to $C^2$-solutions---in particular, \cref{eq.griffith} shows $\ell$ can fail to be $C^2$ even when $f$ is smooth.
\end{remark}

The main ideas of the proof of \cref{thm.control} are analogous to those of \cref{thm.control_pre}.
However, here we also must ensure that derivative quantities---namely $u'$, $f'$, $\ell'$---also vary continuously.

\begin{proof}[Proof of \cref{thm.control}.]
Our assumptions and \cref{thm.wp} yield the existence of the $C^1$-solution $( \ell, y )$ of \crefrange{eq.debonding_y}{eq:griff_G}, as long as we construct $u \in C^1 [ 0, T ]$ satisfying
\begin{equation}
\label{eql.control_0} u (0) = y_0 (0) \text{,} \qquad u' (0) = y_1 (0) \text{.}
\end{equation}
Setting $u (0)$ and $u' (0)$ as in \cref{eql.control_0}, it once again suffices to define $u' \in C^0 [ 0, T ]$, as $u$ is then determined by integrating from $t = 0$.
Moreover, recall the quantities $t_\ast = \ell_\ast$ and $\ell_\ast'$ from \cref{def.initial_branch}, and recall once more that $\ell$ is independent of $u$ on $[ 0, t_\ast ]$.

\uline{Step 1.}
Our first goal is to expand the domain to the starting size $\bar{\ell}_\ast$ of the final branch.
The process for achieving this varies slightly, depending on the following cases:
\begin{enumerate}[label=(\arabic*)]
\item \label{it:c1} $\ell_\ast = \bar{\ell}_\ast$.

\item \label{it:c2} $\ell_\ast < \bar{\ell}_\ast$, which splits into additional cases:
\begin{enumerate}[label=(\alph*)]
\item \label{it:2a} $\ell_\ast' > 0$ and $\bar{\ell}_\ast' > 0$.
\item \label{it:2b} $\ell_\ast' = 0$ and $\bar{\ell}_\ast' > 0$.
\item \label{it:2c} $\ell_\ast' > 0$ and $\bar{\ell}_\ast' = 0$.
\item \label{it:2d} $\ell_\ast' = 0$ and $\bar{\ell}_\ast' = 0$.
\end{enumerate}
\end{enumerate}
In particular, we set $\ell$ and $u'$ differently in each of the above cases.

In general, we prescribe $\ell$ on $[ t_\ast, \bar{t}_\ast ]$ to be a $C^1$-function satisfying
\begin{equation}
\label{eql.control_1} ( \ell, \ell' ) ( t_\ast ) = ( \ell_\ast, \ell_\ast' ) \text{,} \qquad ( \ell, \ell' ) ( \bar{t}_\ast ) = ( \bar{\ell}_\ast, \bar{\ell}_\ast' ) \text{.}
\end{equation}
Furthermore, since by \cref{eq.control_time}, the timespan is strictly greater than the length difference,
\[
\bar{t}_\ast - t_\ast > \bar{\ell}_\ast - \ell_\ast > 0 \text{,}
\]
then we can also stipulate that $\ell$ satisfies
\begin{equation}
\label{eql.control_2} \begin{cases}
0 \leq \ell' (t) < 1 \text{,} & t \in [ t_\ast, \bar{t}_\ast ] \text{,} \\
\ell' (t) = 0 \text{,} & t \in [ t_\circ - \delta, t_\circ + \delta ] \text{.}
\end{cases} \qquad
t_\circ \coloneqq \tfrac{1}{2} ( t_\ast + \bar{t}_\ast ) \text{,} \quad 0 < \delta \ll \bar{t}_\ast - t_\ast
\end{equation}
We can impose additional conditions depending on the above cases.
\begin{itemize}
\item Case \cref{it:c1}:  \cref{eq.control_time} implies $\ell_\ast' = \bar{\ell}_\ast' = 0$.
Here, there is only possible $\ell$ that is $C^1$ and satisfies \crefrange{eql.control_1}{eql.control_2}---more specifically, we set
\begin{equation}
\label{eql.control_3a} \ell (t) = \ell_\ast = \bar{\ell}_\ast \text{,} \qquad t \in [ t_\ast, \bar{t}_\ast ] \text{.}
\end{equation}

\item Case \cref{it:c2}: Since $\ell_\ast < \bar{\ell}_\ast$, we can also set $\ell$ so that $\ell' > 0$ at most times:
\begin{align}
\label{eql.control_3b} \ell' (t) > 0 \text{,} &\qquad \begin{cases}
t \in [ t_\ast, t_\circ - \delta ) \cup ( t_\circ + \delta, \bar{t}_\ast ] \text{,} & \text{case \cref{it:2a},} \\
t \in ( t_\ast + \delta, t_\circ - \delta ) \cup ( t_\circ + \delta, \bar{t}_\ast ] \text{,} & \text{case \cref{it:2b},} \\
t \in [ t_\ast, t_\circ - \delta ) \cup ( t_\circ + \delta, \bar{t}_\ast - \delta ) \text{,} & \text{case \cref{it:2c},} \\
t \in ( t_\ast + \delta, t_\circ - \delta ) \cup ( t_\circ + \delta, \bar{t}_\ast - \delta ) \text{,} & \text{case \cref{it:2d},}
\end{cases} \\
\notag \ell' (t) = 0 \text{,} &\qquad \begin{cases}
t \in [ t_\circ - \delta, t_\circ + \delta ] \text{,} & \text{case \cref{it:2a},} \\
t \in [ t_\ast, t_\ast + \delta ] \cup [ t_\circ - \delta, t_\circ + \delta ] \text{,} & \text{case \cref{it:2b},} \\
t \in [ t_\circ - \delta, t_\circ + \delta ] \cup [ \bar{t}_\ast - \delta, \bar{t}_\ast ] \text{,} & \text{case \cref{it:2c},} \\
t \in [ t_\ast, t_\ast + \delta ] \cup [ t_\circ - \delta, t_\circ + \delta ] \cup [ \bar{t}_\ast - \delta, \bar{t}_\ast ] \text{,} & \text{case \cref{it:2d}.}
\end{cases}
\end{align}
\end{itemize}
We also define $\tau_\pm$ on $[ 0, \bar{t}_\ast ]$ using \cref{eq.tau}, with the above $\ell$.

We now claim there exists $u' \in C^0 [ 0, \tau_- ( \bar{t}_\ast ) ]$ so that \cref{eq.f} and \cref{eq.griffith} hold for $f \in C^1 [ -\ell_0, \tau_- ( \bar{t}_\ast ) ]$ and the above extension $\ell \in C^1 [ 0, \bar{t}_\ast ]$, that the compatibility conditions \cref{eql.control_0} also hold, and
\begin{equation}
\label{eql.control_3} f' ( \tau_- ( \bar{t}_\ast ) ) = \tfrac{1}{2} ( \bar{y}_1 + \bar{y}_0' ) (0) \, \tfrac{ 1 + \ell' ( \bar{t}_\ast ) }{ 1 - \ell' ( \bar{t}_\ast ) } \text{.}
\end{equation}

To prove the claim, we first set $u (0)$ and $u' (0)$ as in \cref{eql.control_0}.
Then, \cref{eq.f}, \cref{eq.ff}, and \cref{eql.control_0} imply $f$ is $C^1$ at $0$, and \cref{eq.griffith} yields that $\ell$ is $C^1$ at $t_\ast$.
In particular, $f' (0)$ is fully determined by \cref{eq.ff}.
From here, it suffices to set $f' \in C^0 [ 0, \tau_- ( \bar{t}_\ast ) ]$ so that \cref{eq.griffith} holds, since \cref{eq.ff} (via the usual iteration) then gives the desired $u' \in C^0 [ 0, \tau ( \bar{t}_\ast ) ]$.

The construction of $f'$ on $( 0, \tau_- ( \bar{t}_\ast ) ]$ varies according to each of the above-mentioned cases:
\begin{itemize}
\item For case \cref{it:c1}, we set $f' \in C^0 [ 0, \tau_- ( \bar{t}_\ast ) ]$ to be any function that satisfies \cref{eql.control_3} and
\begin{equation}
\label{eql.control_4a} 2 f' (s)^2 \leq \kappa ( \ell ( \tau_-^{-1} (s) ) ) \text{,} \qquad s \in [ 0, \tau_- ( \bar{t}_\ast ) ] \text{.}
\end{equation}
Such a $f'$ indeed exists, since the inequality in \cref{eql.control_4a} holds for $s = 0$ (by \cref{eq.griffith}, since $\ell' ( t_\ast ) = 0$) and $s = \tau_-^{-1} ( \bar{t}_\ast )$.
The latter holds since $\ell' ( \bar{t}_\ast ) = 0$, so by \cref{eq.final_branch_0} and \cref{eql.control_3},
\begin{align*}
2 f' ( \tau_- ( \bar{t}_\ast ) )^2 &= \tfrac{1}{2} | ( \bar{y}_1 + \bar{y}_0' ) (0) |^2 \\
&\leq \kappa ( \ell ( \bar{t}_\ast ) ) \text{.}
\end{align*}
Observe, from \cref{eql.control_3a} and \cref{eql.control_4a}, that this choice of $f'$ indeed satisfies \cref{eq.griffith}.

\item For case \cref{it:2a}, we see from the first part of \cref{eql.control_3b} that given any $s \in \tau_- ( [ t_\ast, t_\circ - \delta ] )$, there are up to two possible $f' (s)$ satisfying \cref{eq.griffith} (since $\ell' > 0$ in \cref{eq.griffith}).
Since $f' (0)$ is already set, we can then choose each $f' (s)$ to vary continuously in $s$.
Similarly, for $s \in \tau_- ( [ t_\circ + \delta, \bar{t}_\ast ] )$, we can choose $f' (s)$ to vary continuously in $s$ and to satisfy \cref{eq.griffith} and the boundary condition \cref{eql.control_3}.
Finally, for $s \in \tau_- ( [ t_\circ - \delta, t_\circ + \delta ] )$, we can---similarly to case \cref{it:c1}---choose any value of $f' (s)$ so that this varies continuously in $s$, connects the already determined values at $s = \tau_- ( t_\circ \pm \delta )$, and satisfies
\[
2 f' (s)^2 \leq \kappa ( \ell ( \tau_-^{-1} (s) ) ) \text{.}
\]
Once again, this gives $f' \in C^0 [ 0, \tau_- ( \bar{t}_\ast ) ]$ satisfying \cref{eq.griffith}.

\item For case \cref{it:2b}, we set $f' (s)$ for $s \in \tau_- ( [ t_\ast, t_\ast + \delta ] )$ to vary continuously in $s$, so that
\[
2 f' (s)^2 \leq \kappa ( \ell ( \tau_-^{-1} (s) ) ) \text{,} \qquad 2 f' ( \tau_- ( t_\ast + \delta ) )^2 = \kappa ( \ell ( t_\ast + \delta ) ) \text{.}
\]
In particular, $f' ( \tau_- ( t_\ast + \delta ) )$ is such that $\ell' ( t_\ast + \delta )$ is, by \cref{eq.griffith}, at the threshold of becoming positive.
Thus, for $s \in \tau_- ( ( t_\ast + \delta, t_\circ - \delta ] )$ (where $\ell' > 0$ in \cref{eq.griffith}), we can use \cref{eq.griffith} to find $f' (s)$ such that it varies continuously in $s$, even at $s = \tau_- ( t_\ast + \delta )$.
Finally, we set $f' (s)$ for $s \in \tau_- ( [ t_\circ + \delta, \bar{t}_\ast ] )$, and then for $s \in \tau_- ( [ t_\circ - \delta, t_\circ + \delta ] )$, in the same manner as in case \cref{it:2a}.

\item For case \cref{it:2c}, we first set $f' (s)$ for $s \in \tau_- ( [ t_\ast, t_\circ - \delta ] )$ as in case \cref{it:2a}.
Next, similar to case \cref{it:2b}, we set $f' (s)$ for $s \in \tau_- ( [ \bar{t}_\ast - \delta, \bar{t}_\ast ] )$ to vary continuously in $s$, so that
\[
2 f' (s)^2 \leq \kappa ( \ell ( \tau_-^{-1} (s) ) ) \text{,} \qquad 2 f' ( \tau_- ( \bar{t}_\ast - \delta ) )^2 = \kappa ( \ell ( \bar{t}_\ast - \delta ) ) \text{,}
\]
and such that \cref{eql.control_3} holds. Such values of $f' (s)$ exist by the same reasoning as in case \cref{it:c1}.
From the above, we can then use \cref{eq.griffith} to solve for continuously varying $f' (s)$ for $s \in \tau_- ( [ t_\circ + \delta, \bar{t}_\ast - \delta ] )$ (where $\ell' > 0$ by \cref{eq.griffith}).
Finally, we set $f' (s)$ for $s \in \tau_- ( [ t_\circ - \delta, t_\circ + \delta ] )$ in the same manner as in cases \cref{it:2a} and \cref{it:2b}.

\item For case \cref{it:2d}, we set $f' (s)$ for $s \in \tau_- ( [ t_\ast, t_\ast + \delta ] )$ and $s \in \tau_- ( [ t_\ast + \delta, t_\circ - \delta ] )$ as in case \cref{it:2b}.
We then set $f' (s)$ for $s \in \tau_- ( [ \bar{t}_\ast - \delta, \bar{t}_\ast ] )$ and $s \in \tau_- ( [ t_\circ + \delta, \bar{t}_\ast - \delta ] )$ as in case \cref{it:2c}.
Finally, we set $f' (s)$ for $s \in \tau_- ( [ t_\circ - \delta, t_\circ + \delta ] )$ in the same manners as in cases \cref{it:2a}--\cref{it:2c}.
\end{itemize}
Thus, in each case, we have constructed $f' \in C^0 [ 0, \tau_- ( \bar{t}_\ast ) ]$ satisfying \cref{eq.griffith} and \cref{eql.control_3}.
As mentioned before, the above also yields $u' \in C^0 [ 0, \tau_- ( \bar{t}_\ast ) ]$ saisfying \cref{eq.ff}.

Integrating $u'$ and $f'$ (the latter from $f (0) = 0$), we obtain
\[
u \in C^1 [ 0, \tau_- ( \bar{t}_\ast ) ] \text{,} \qquad f \in C^{ 0, 1 } [ -\ell_0, \tau_- ( \bar{t}_\ast ) ]
\]
satisfying \cref{eq.f} and \cref{eq.griffith}, completing the proof of the aforementioned claim.

\uline{Step 2.}
We next aim to achieve the final domain size $\bar{\ell}_0$ and half of $( \bar{y}_0, \bar{y}_1 )$.
As in the proof of \cref{thm.control_pre}, we prescribe $\ell \in C^{ 0, 1 } ( \bar{t}_\ast, T ]$ to match the given final branch:
\begin{equation}
\label{eql.control_10} \ell (t) \coloneqq \mathscr{L} (t) \text{,} \qquad t \in ( \bar{t}_\ast, T ] \text{.}
\end{equation}
As before, we extend $\tau_\pm$ to $( \bar{t}_\ast, T ]$ using \cref{eq.tau} and the above $\ell$.

Using \cref{eq.ff}, we solve for $u' \in C^0 ( \tau_- ( \bar{t}_\ast ), \tau_- (T) ]$ so that $f' \in C^0 ( \tau_- ( \bar{t}_\ast ), \tau_- (T) ]$ satisfies
\begin{equation}
\label{eql.control_11} f' (s) \coloneqq - \tfrac{1}{2} ( \bar{y}_1 + \bar{y}_0' ) ( ( \tau_+ \circ \tau_-^{-1} ) (s) - T ) \, \tfrac{ 1 + \ell' ( \tau_-^{-1} (s) ) }{ 1 - \ell' ( \tau_-^{-1} (s) ) } \text{,} \qquad s \in ( \tau_- ( \bar{t}_\ast ), \tau_- (T) ] \text{,}
\end{equation}
with $\ell$ as in \cref{eql.control_10}.
We have that \cref{eql.control_11} is well-defined, by the same reasoning as in the proof of \cref{thm.control_pre}; moreover, the right-hand side of \cref{eql.control_11} is now a continuous function of $s$.

Now, by the same computations as in the proof of \cref{thm.control_pre}, we derive that \crefrange{eql.control_10}{eql.control_11} together satisfy \cref{eq.f} and \cref{eq.griffith}, after integrating $u'$ and $f'$ as before.
Also, by \cref{eql.control_3} and \cref{eql.control_11}, we see that $f$ is $C^1$ at $\tau_- ( \bar{t}_\ast )$, and hence $u$ is also $C^1$ at $\tau_- ( \bar{t}_\ast )$.
From \cref{eq.griffith}, we have that $\ell$ is $C^1$ at $\bar{t}_\ast$, and we conclude the following quantities satisfy \cref{eq.f} and \cref{eq.griffith}:
\[
u \in C^1 [ 0, \tau_- (T) ] \text{,} \qquad f \in C^1 [ -\ell_0, \tau_- (T) ] \text{,} \qquad \ell \in C^1 [ 0, T ] \text{.}
\]

\uline{Step 3.}
The last step is to finish prescribing $( \bar{y}_0, \bar{y}_1 )$.
As in the proof of \cref{thm.control_pre}, we extend $u' \in C^0 ( \tau_- (T), T ]$ so that $f' \in C^0 ( \tau_- (T), T ]$ satisfies
\begin{equation}
\label{eql.control_12} f' (s) \coloneqq \tfrac{1}{2} ( \bar{y}_1 - \bar{y}_0' ) ( T - s ) \text{,} \qquad s \in [ \tau_- (T), T ] = [ T - \bar{\ell}_0, T ] \text{.}
\end{equation}

Letting $\alpha$ as in \cref{eq.final_set_1}, then by \cref{eq.final_branch_1}, \cref{eql.control_11}, and \cref{eql.control_12}, we have
\begin{align*}
f' ( \tau_- (T) ) &= - \tfrac{1}{2} ( \bar{y}_1 + \bar{y}_0' ) ( \bar{\ell}_0 ) \, \tfrac{ 1 + \ell' (T) }{ 1 - \ell' (T) } \text{,} \\
&= - \tfrac{1}{2} ( - \alpha \bar{y}_0' + \bar{y}_0' ) ( \bar{\ell}_0 ) \, \tfrac{ 1 + \alpha }{ 1 - \alpha } \\
&= - \tfrac{1}{2} ( 1 + \alpha ) \, \bar{y}_0 ( \bar{\ell}_0 ) \text{,} \\
\lim_{ s \searrow \tau_- (T) } f' (s) &= \tfrac{1}{2} ( \bar{y}_1 - \bar{y}_0' ) ( \bar{\ell}_0 ) \\
&= - \tfrac{1}{2} ( 1 + \alpha ) \, \bar{y}_0 ( \bar{\ell}_0 ) \text{,}
\end{align*}
and it follows that $f'$ and $u'$ are continuous at $\tau_- (T)$.
Integrating $u'$ and $f'$ as before, we have
\[
u \in C^1 [ 0, T ] \text{,} \qquad f \in C^1 [ -\ell_0, T ] \text{,} \qquad \ell \in C^1 [ 0, T ]
\]
that form a solution of \cref{eq.f} and \cref{eq.griffith}.

From the above solution $( \ell, f )$, we obtain from \cref{eq.y} a $C^1$-solution $( \ell, y )$ of \crefrange{eq.debonding_y}{eq:griff_G}.
The same computations as in the proof of \cref{thm.control_pre} (using \cref{eq.compat_0}, \cref{eq.yyy}, \cref{eq.final_set_0}, \cref{eql.control_11}, and \cref{eql.control_12}) yield that \cref{eq.control} holds, completing the proof of the theorem.
\end{proof}

\begin{remark}
In Step 1 of the proof of \cref{thm.control}, we imposed a middle region $[ t_\circ - \delta, t_\circ + \delta ]$ where $\ell' = 0$.
This is because $f' ( \tau_- (t) )$ may have opposite signs near $t = t_\ast$ and $t = \bar{t}_\ast$, and this middle region provides a way for $f'$ to change signs continuously.
This middle region is not strictly needed in cases \crefrange{it:2b}{it:2d}, as we could also choose the sign of $f'$ near either $t_\ast$ or $\bar{t}_\ast$.
\end{remark}

Again, for the special case of static final branches, we can reduce \cref{thm.control} to a version for which we need not refer to the technical admissible final branch condition.

\begin{corollary}[$C^1$-controllability, static final branch] \label{thm.control_static}
Assume the following conditions hold:
\begin{enumerate}[label=(\roman*)]
\item Initial data $\ell_0 > 0$ and $( y_0, y_1 ) \in ( C^1 \times C^0 ) [ 0, \ell_0 ]$ satisfying
\begin{equation}
\label{eq.control_static_compat} y_0 ( \ell_0 ) = 0 \text{,} \qquad y_1 ( \ell_0 ) = - \max \Bigg[ \frac{ \frac{1}{2} ( y_1 ( \ell_0 ) - y_0' ( \ell_0 ) )^2 - \kappa ( \ell_0 ) }{ \tfrac{1}{2} ( y_1 ( \ell_0 ) - y_0' ( \ell_0 ) )^2 + \kappa ( \ell_0 ) }, \, 0 \Bigg] \, y_0' ( \ell_0 ) \text{.}
\end{equation}

\item Final time $T > 0$ and final data $\bar{\ell}_0 > 0$, $( \bar{y}_0, \bar{y}_1 ) \in ( C^1 \times C^0 ) [ 0, \bar{\ell}_0 ]$ satisfying
\begin{equation}
\label{eq.control_static_time} \begin{cases} \bar{\ell}_0 \geq \ell_\ast \text{,} & \ell_\ast' = 0 \text{,} \\ \bar{\ell}_0 > \ell_\ast \text{,} & \text{otherwise,} \end{cases} \qquad T > 2 \bar{\ell}_0 \text{,}
\end{equation}
as well as the compatibility and constraint conditions
\begin{equation}
\label{eq.control_static_const} \bar{y}_0 ( \bar{\ell}_0 ) = \bar{y}_1 ( \bar{\ell}_0 ) = 0 \text{,} \qquad \| \bar{y}_1 + \bar{y}_0' \|_{ L^\infty [ 0, \bar{\ell}_0 ] }^2 \leq 2 \, \kappa ( \bar{\ell}_0 ) \text{.}
\end{equation}
\end{enumerate}
Then, there exists a control $u \in C^1 [ 0, T ]$ so that the solution $( y, \ell )$ of \crefrange{eq.debonding_y}{eq:griff_G} satisfies
\begin{equation}
\label{eq.control_static} \ell (T) = \bar{\ell}_0 \text{,} \qquad y ( T, \cdot ) = \bar{y}_0 \text{,} \qquad \partial_t y ( T, \cdot ) = \bar{y}_1 \text{.}
\end{equation}
\end{corollary}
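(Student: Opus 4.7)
The plan is to reduce \cref{thm.control_static} to \cref{thm.control}, in precise analogy with the deduction of \cref{thm.control_static_pre} from \cref{thm.control_pre}. The static final branch supplied by \cref{thm.final_branch_static} will furnish the admissible final branch required to invoke \cref{thm.control}, so the entire argument is one of verifying hypotheses.

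First, I would check that $( \bar{\ell}_0, \bar{y}_0, \bar{y}_1 )$ is a $C^1$-admissible final data set in the sense of \cref{def.final_set}. The identity \cref{eq.final_set_0} is the first equality in \cref{eq.control_static_const}, while \cref{eq.final_set_1} is satisfied with the choice $\alpha = 0$, since the same line gives $\bar{y}_1 ( \bar{\ell}_0 ) = 0$ (the case $\bar{y}_0' ( \bar{\ell}_0 ) = 0$ is harmless, as $\alpha = 0$ vacuously yields $0 = -\alpha \, \bar{y}_0' ( \bar{\ell}_0 )$).

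Next, I would apply \cref{thm.final_branch_static} with $X = C^1$. Its hypotheses \cref{eq.final_branch_static_ass} are exactly the two statements bundled in \cref{eq.control_static_const}: the $L^\infty$-bound on $\bar{y}_1 + \bar{y}_0'$ and the endpoint condition $\bar{y}_1 ( \bar{\ell}_0 ) = 0$. This produces the static $C^1$-admissible final branch $\mathscr{L} (t) \equiv \bar{\ell}_0$ on $[ T - \bar{\ell}_0, T ]$, together with the parameters $\bar{t}_\ast = T - \bar{\ell}_0$, $\bar{\ell}_\ast = \bar{\ell}_0$, $\bar{\ell}_\ast' = 0$ from \cref{eq.ell_asts_static}.

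Finally, I would verify the remaining hypotheses of \cref{thm.control}. The initial-data compatibility \cref{eq.control_compat} coincides with \cref{eq.control_static_compat}, and $T > \ell_0$ follows from $T > 2 \bar{\ell}_0 \geq 2 \ell_\ast \geq 2 \ell_0 > \ell_0$ (using $\ell_\ast \geq \ell_0$ because $\ell$ is nondecreasing from $\ell_0$, and $\bar{\ell}_0 \geq \ell_\ast$ from \cref{eq.control_static_time}). Because $\bar{\ell}_\ast' = 0$, the case split in \cref{eq.control_time} reduces exactly to the case split in \cref{eq.control_static_time} with $\bar{\ell}_\ast = \bar{\ell}_0$. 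The remaining inequality $\bar{\ell}_\ast < \bar{t}_\ast$ becomes $\bar{\ell}_0 < T - \bar{\ell}_0$, i.e., $T > 2 \bar{\ell}_0$, which is assumed. Invoking \cref{thm.control} then delivers the desired $u \in C^1 [ 0, T ]$ satisfying \cref{eq.control_static}. I expect no substantive obstacle: the corollary is a repackaging of \cref{thm.control} in the static-branch regime, where the somewhat technical admissible-final-branch assumption is replaced by the clean constraint \cref{eq.control_static_const}.
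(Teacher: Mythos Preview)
Your proposal is correct and follows exactly the paper's approach: invoke \cref{thm.final_branch_static} to obtain the static $C^1$-admissible final branch, read off $\bar{t}_\ast = T - \bar{\ell}_0$, $\bar{\ell}_\ast = \bar{\ell}_0$, $\bar{\ell}_\ast' = 0$, and verify the hypotheses of \cref{thm.control}. Your write-up is in fact more thorough than the paper's (you spell out why $T > \ell_0$ and why the case split in \cref{eq.control_time} matches \cref{eq.control_static_time}), but the strategy is identical.
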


\begin{proof}
From \cref{eq.control_static_const}, we have that the function $\mathscr{L}$ given by \cref{eq.final_branch_static} is a $C^1$-admissible final branch, with $\bar{t}_\ast = T - \bar{\ell}_0$, $\bar{\ell}_\ast = \bar{\ell}_0$, and $\bar{\ell}_\ast' = 0$.
The above and \cref{eq.control_static_time} imply \cref{eq.control_time} holds with $\mathscr{L}$ as above.
Applying \cref{thm.control} with the above $\mathscr{L}$ results in the desired control.
\end{proof}

\vspace{0.2cm}

\section*{Acknowledgments}

N.~De Nitti is a member of the Gruppo Nazionale per l'Analisi Matematica, la Probabilità e le loro Applicazioni (GNAMPA) of the Istituto Nazionale di Alta Matematica (INdAM).

A.~Shao was supported, for part of this work, by EPSRC grant EP/Y021487/1.

We thank the Centro de Ciencias de Benasque Pedro Pascual, where part of this work was carried out during the Benasque X Workshop--Summer School 2024 on \textit{Partial Differential Equations, Optimal Design and Numerics}. 
N.~De Nitti also gratefully acknowledges the hospitality of Queen Mary University of London, where part of this work was also completed. 

We are grateful to F.~Riva and E.~Zuazua for helpful discussions related to this work.

\vspace{0.5cm}

\printbibliography

\vfill 

\end{document}